\documentclass[11pt,twoside]{amsart}    

\usepackage{prefix}
\providecommand{\bstar}{\mathbin{\bar{\star}}}

\title{Shifted lower Bruhat intervals are EL-shellable}
 \author{Aman Dave}
 \address{Westwood High School} 
 \email{amanpdave@gmail.com}
 \author{Taiwen Feng}
 \address{Cranbrook Kingswood Upper School} 
 \email{tfeng866@gmail.com}
  \author{Nathan Lesnevich}
 \address{Oklahoma State University} 
  \email{nlesnev@okstate.edu}
 \author{SuHo Oh}
 \address{Texas State University} 
 \email{suhooh@txstate.edu}
 \author{Edward Richmond}
 \address{Oklahoma State University} 
\email{edward.richmond@okstate.edu}
 \author{Ethan Wang}
 \address{West Windsor-Plainsboro High School North} 
 \email{ethan.wang2918@gmail.com}
 \author{Aaron Wu}
 \address{Seven Lakes High School} 
 \email{aaron.wu.401@gmail.com}

\subjclass[2020]{Primary 06A07; Secondary 05E45, 20F55}
\keywords{Bruhat order, Coxeter groups, EL-shellability, reflection orderings, Demazure product, Richardson varieties}

\begin{document}

\begin{abstract}
Let $W$ be an arbitrary Coxeter group. The shifted Bruhat interval
$[w_1,w_2]\,x^{-1}$, the translate of the Bruhat interval $[w_1,w_2]$ by an element $x$, is partially ordered by the Bruhat order of $W$. These posets arise from affine pavings of Richardson varieties, and in general they are neither twisted intervals nor tilted Bruhat intervals. Our main result is that the shifted lower intervals $\swx$ are EL-shellable for every Coxeter group, via an explicit labeling of each cover by a reflection. Along the way we show that $\swx$ is a graded poset with a unique maximum given by the Demazure product and a unique minimum given by an opposite Demazure operator that we introduce.
\end{abstract} 
\maketitle

\section{Introduction}
The Bruhat order of a Coxeter group $(W,S)$ is one of the central objects of algebraic combinatorics, and its topology is remarkably well behaved in that every interval is shellable. Highlights of this story include Verma's computation of the M\"obius function \cite{Verma}, the lexicographic shellability framework of Bj\"orner and Wachs \cite{BW82,BW83}, and the explicit EL-labelings constructed by Edelman and Proctor in the classical types \cite{Edelman,Proctor}. Dyer \cite{Dyer93} showed that any reflection ordering of the reflections of $W$ yields a uniform EL-labeling of every Bruhat interval, obtained by labeling each cover relation with its associated reflection.
Consequently, Bruhat intervals are shellable and Cohen-Macaulay, and the order complex of each open interval is homeomorphic to a sphere \cite{BW82}.

In this paper, we study a family of posets obtained by shifting a Bruhat interval by a fixed element. For $w, x \in W$ define the \newword{shifted (lower) Bruhat interval}
\[
  \swx  \;=\; \{\, vx^{-1} : v \le w \,\},
\]
regarded as a poset under the Bruhat order of $W$ restricted to this set.

An alternative view keeps the elements of $[e,w]$ and changes the order through the bijection $v \mapsto vx^{-1}$.  In particular, for $u,v\in [e,w]$, one could equip the pulled-back order 
\[u \leq_x v \iff ux^{-1} \le vx^{-1}.\]
By the cocycle identity for inversion sets \cite[\S 2]{Dyer90}, this
pulled-back order is Dyer's \newword{twisted Bruhat order} $\le_A$ where the twisting set $A$ is given by the set of left inversions of $x^{-1}$ \cite{Dyer92}. Thus $\swx$ is fundamentally a hybrid combinatorial object: its ground set is carved out by the untwisted Bruhat order, whereas its relations are governed by the twisted one. As we explain below, this hybridity is exactly what places $\swx$ outside the reach of the existing shellability theorems, and resolving it is the main content of this paper.


Our primary motivation comes from algebraic geometry. In an upcoming project \cite{LOR}, we study affine pavings of Richardson varieties obtained via intersections with the Schubert cells of a \newword{shifted Borel subgroup}. The cells constituting these pavings are naturally indexed by shifted Bruhat intervals, and the combinatorics of their induced order dictates the
cell-attaching data.

To contextualize shifted intervals, it is instructive to contrast them with two established generalizations of the Bruhat order:
\begin{itemize}
\item \newword{Twisted Bruhat orders} ($\le_A$): Introduced by Dyer \cite{Dyer92,Dyer94} for a biclosed set of reflections $A$, these posets are shelled by reflection orderings featuring $A$ as an initial section. However, for a finite group $W$, every biclosed set is an inversion set, rendering $\le_A$ merely a right translate of the standard Bruhat order. Thus, twisted intervals are ordinary Bruhat intervals in disguise.
\item \newword{Tilted Bruhat orders}: Defined by Brenti, Fomin, and Postnikov \cite{BFP} using the quantum Bruhat graph, these orders deform the Bruhat order along a different axis by
incorporating the length-decreasing \newword{quantum edges}. Tilted intervals are lexicographically shellable and Eulerian: every tilted interval is the face poset of a shellable regular CW sphere \cite{BFP} (see \cite{GGG} for their connections with tilted Richardson varieties, their geometric counterparts).
\end{itemize}

Shifted intervals belong to neither family. As can be checked from Figure~\ref{fig:longstemex} and Example~\ref{ex:longstem}, they routinely violate thinness: they can contain rank-$2$ intervals that are chains. Hence they are in general neither Eulerian \cite{Stanley12} nor isomorphic to tilted or twisted Bruhat intervals.

Despite this structural irregularity, shifted lower
intervals retain remarkably rigid combinatorial properties.  First, we show they have a unique minimum and maximum.  The maximum arises from the \newword{Demazure product}, and the minimum from an \newword{opposite Demazure operator} that we introduce in Section 4. Next, we show that they are graded posets. This is not automatic, as many shifted intervals fail to be graded (for example, see Figure~\ref{fig:nongraded}). Lastly, we have the following main theorem on shellability:



\begin{theorem}\label{thm:main_intro}
Let $W$ be an arbitrary Coxeter group and $w, x \in W$. The shifted lower Bruhat interval $\swx$ is EL-shellable.
\end{theorem}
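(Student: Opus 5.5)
We plan to label each cover relation $ux^{-1}\lessdot vx^{-1}$ in $\swx$ by the unique reflection $t$ with $vx^{-1}=t\,ux^{-1}$, i.e.\ $t=vu^{-1}$. We then order the reflections by a reflection ordering $\prec$ in which the left inversion set $N(x^{-1})$ (equivalently, the twisting set $A$ from the introduction) is an initial section, possibly reversed. This is the natural candidate. On the ground set $[e,w]$ the pulled-back order is Dyer's twisted order $\le_A$, and Dyer showed that reflection orderings with $A$ as an initial section give EL-labelings of $\le_A$-intervals. The difficulty is that our ground set is a lower interval in the \emph{untwisted} order, not a $\le_A$-interval. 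So the proof has to check that Dyer's argument survives truncation to $[e,w]$.

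The first step is structural. Using the results stated earlier, $\swx$ is graded, with maximum the Demazure product $w\star x^{-1}$ and minimum given by the opposite Demazure operator. Hence every interval $[a,b]$ of $\swx$ is a bounded graded poset, and EL-shellability amounts to showing that each such interval has exactly one label-increasing maximal chain, and that this chain is lexicographically first. Covers in $\swx$ are covers in the twisted order on $[e,w]$, i.e.\ relations $u\to tu$ changing twisted length by one.

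The second step proves existence and lexicographic minimality. Given $a=ux^{-1}<b=vx^{-1}$ in $\swx$, we would take $t$ to be the $\prec$-least reflection such that $t a$ covers $a$ in $\swx$ and $ta\le b$. We then argue, following Dyer's proof, that any chain whose first label is not this minimal $t$ can be improved. The tool is Dyer's dihedral reflection-subgroup lemma: for a rank-$2$ interval with two distinct labels, the labels restricted to the dihedral subgroup appear in increasing-then-decreasing form. The point to check is that the intermediate elements produced by the dihedral exchange still lie in the image of $[e,w]$. Here we use the subword property together with the fact that $w\star x^{-1}$ dominates the whole poset, so that elements below $w$ remain below $w$ after the exchange. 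Induction on rank then gives exactly one increasing chain, and that chain is lex-first.

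The main obstacle is rank-$2$ intervals of $\swx$ that are chains, which are exactly where thinness fails. In a twisted interval every rank-$2$ interval has two elements, and the dihedral argument needs both. In $\swx$ one of them may lie outside $[e,w]x^{-1}$. For these intervals we must show directly that the unique chain is increasing, i.e.\ that the missing middle element is always the one carrying the decreasing pair of labels. We would attack this with a lifting-property case analysis. Write $b a^{-1}$ in the dihedral subgroup. The missing element $t'ux^{-1}$ has $t'u\not\le w$, and $N(x^{-1})$ being an initial section should force $t'$ to come after the remaining labels in $\prec$. If this fails for an arbitrary reflection ordering, we would refine the choice of $\prec$, making it compatible with both $N(x^{-1})$ and the left descents of $w$, and redo the check.
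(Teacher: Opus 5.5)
There is a genuine gap, and it lies in the labeling itself rather than in the verification. You label a cover $ux^{-1}\lessdot vx^{-1}$ by $vu^{-1}$, which is the \emph{left} label of the shifted cover. No total order on $T$ makes this an EL-labeling. In Example~\ref{ex:longstem} ($w=2341$, $x=3214$), the interval $[4123,4321]$ of $[e,2341]\,(3214)^{-1}$ is the chain $4123\lessdot 4213\lessdot 4312\lessdot 4321$. Its left labels (swaps of values) are $t_{12},t_{23},t_{12}$. In fact every maximal chain of the whole poset carries $t_{12}$ twice. So there is no increasing maximal chain for any choice of $\prec$, and refining $\prec$ using descents of $w$ cannot help.

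The structural reason is that, for left labels, the shift reverses the pair $\{u,ru\}$ exactly when $r\in N(u)\,\Delta\,N(ux^{-1})=u\,N(x^{-1})\,u^{-1}$. This set moves with $u$, so no fixed initial section can predict which edges flip. The missing idea is to use the \emph{right} labels $\lambda_R(y\lessdot z)=y^{-1}z$ of the shifted elements. By the switching lemma (Lemma~\ref{lem:switch}), the pair $\{u,ur\}$ flips exactly when $xrx^{-1}\in N(x)$, independently of $u$. The relevant initial section is then $N(x)$, not $N(x^{-1})$.

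With that labeling, your third step closes in a few lines. Let $m$ be the dead middle of a rank-$2$ chain $[y,z]$. Then $mx\not\le w$ while $yx,zx\le w$, and $mx$ is comparable to both, which forces $yx<mx$ and $zx<mx$. So the lower edge of the dead chain keeps its direction (label outside $N(x)$) and the upper edge flips (label in $N(x)$). The dead chain is therefore decreasing for any $N(x)$-initial ordering, and the surviving chain is increasing.

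Your second step should also be replaced. The claim that exchanged elements stay in $[e,w]\,x^{-1}$ because $w\star x^{-1}$ dominates is unfounded. Membership requires $vx\le w$, and $[e,w\star x^{-1}]$ is far larger than $[e,w]\,x^{-1}$. Exchanges that leave the poset are precisely the broken diamonds.

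Rather than re-running Dyer's proof, use that covers of $[e,w]\,x^{-1}$ have length gap one (Lemma~\ref{lem:ambientcovers}, from Proposition~\ref{prop:unitchains}). Then every maximal chain of $[y,z]$ is a maximal chain of $[y,z]_W$, so uniqueness of the increasing chain is inherited from Dyer's theorem. A lexicographically first chain then has no descent. A descent in a full diamond could be swapped for a lexicographically smaller chain, and a descent in a broken diamond is excluded by the argument above.
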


We emphasize that Theorem \ref{thm:main_intro} holds for all Coxeter groups.  For a precise statement on the corresponding reflection ordering and EL-labeling of $\swx$, see Theorem \ref{thm:main}.

This paper is organized as follows. In Section \ref{section:prelim}, we collect the
necessary preliminaries on Coxeter groups, reflection orderings, and shellability. In Section \ref{section:shift_interval}, we introduce shifted intervals, illustrate them in a running example, and establish the ``switching lemma" (see Lemma \ref{lem:switch}) that drives the rest of the paper. In Section \ref{section:shift_lower_intervals}, we prove the structural properties of shifted lower intervals: unique extrema via the Demazure product and the opposite Demazure operator, and gradedness via the weak generalized lifting property of Caselli, D'Adderio, and Marietti. Section \ref{section:EL_labels}
contains the proof of our main theorem, decomposing the argument into a general poset criterion and a localized broken-diamond analysis. We conclude in Section \ref{section:conclusions} with open questions regarding middle intervals.

\section{Preliminaries}\label{section:prelim}
Throughout this paper, $(W,S)$ is a Coxeter system, not assumed finite, with identity $e$, length function $\ell:W\rightarrow \ZZ_{\geq 0}$, and set of reflections $T := \bigcup_{w \in W} wSw^{-1}$. When $W$ is finite we write $w_0$ for its longest element. We write elements (permutations) of the symmetric group $S_n$ (type $A_{n-1}$) in one-line notation, often as strings of integers, e.g.\ $3412$, and write $t_{ab} \in T$ for the transposition exchanging the values $1\leq a<b\leq n$. We refer to \cite{Bjorner-Brenti05} for standard background on Coxeter groups.

The \newword{Bruhat order} $\le$ on $W$ is defined by declaring $u \le v$ if some reduced word of $u$ appears as a subword of some (equivalently, every) reduced word of $v$ \cite[Theorem~2.2.2]{Bjorner-Brenti05}.  For $w_1\le w_2$, we use the standard notation 
\[ [w_1,w_2]:=\{u\in W : w_1\leq u\leq w_2\}\] 
to denote the corresponding interval. Bruhat order is graded by length and the cover relations are exactly the pairs $u \lessdot v$ with $v = ut$ for some $t \in T$ and $\ell(v) = \ell(u)+1$. Each cover carries two natural reflections and we fix names for both.  For any cover relation $u \lessdot v$, define the \newword{right label} $\lambda_R(u \lessdot v) := u^{-1}v\in T$ and the \newword{left label} $\lambda_L(u \lessdot v) := vu^{-1}\in T$. The distinction between these two labels will be essential in this paper. Recall that the map $w \mapsto w^{-1}$ is an automorphism of Bruhat order and, when $W$ is finite, the map $w \mapsto ww_0$ is an anti-automorphism \cite[Proposition~2.3.4]{Bjorner-Brenti05}.

The \newword{(left) inversion set} of $w \in W$ is
\[
  N(w) \;:=\; \{\, t \in T : \ell(tw) < \ell(w) \,\}.
\]
Note that $|N(w)| = \ell(w)$ and $w$ is determined by $N(w)$. For the symmetric group $S_n$, we have
$N(w) = \{ t_{ab} : a < b,\; w^{-1}(a) > w^{-1}(b) \}$.

The standard geometric representation of $(W,S)$ on a real vector space $V$ comes with a root system of positive and negative roots $\Phi = \Phi^+ \sqcup \Phi^-$, simple roots $\{\alpha_s : s \in S\}$, and a bijection from $T$ to $\Phi^+$ given by $t \mapsto \alpha_t$ \cite[Chapter~4]{Bjorner-Brenti05}. For $w \in W$ and $t \in T$ we have $\ell(tw) < \ell(w)$ if and only if $w^{-1}(\alpha_t) \in \Phi^-$.  So $t \in N(x)$ if and only if $x^{-1}(\alpha_t)$ is a negative root. The simple roots are linearly independent, and every positive root is a nonnegative linear combination of simple roots.  In particular, any linear functional that is strictly positive on the simple roots is strictly positive on $\Phi^+$ and strictly negative on $\Phi^-$.

Let $T_R(w) = N(w^{-1})$ denote the \newword{right inversion set} of $w$.  We get the following \newword{cocycle identity} where $\Delta$ stands for the symmetric difference \cite[Chapter~1, Exercise~12]{Bjorner-Brenti05}, \cite[\S 2]{Dyer90}:
\begin{equation}\label{eqn:cocycleR}
  T_R(uv) \;=\; T_R(v) \,\Delta\, v^{-1} T_R(u) v .
\end{equation}

We will use repeatedly that Bruhat intervals are \newword{thin}, in that every
interval $[w_1,w_2]$ with $\ell(w_2) - \ell(w_1) = 2$ contains exactly two elements strictly between $w_1$ and $w_2$ \cite[Lemma~2.7.3]{Bjorner-Brenti05}.

\subsection{Reflection orderings}

In \cite[Theorem 3.3]{Dyer90}, Dyer proves that if $W'$ is a subgroup of $W$ generated by a subset of reflections in $T$, then $W'$ is a Coxeter group with generating set $\chi(W'):=\{t\in T : N(t)\cap W'=\{t\}\}.$  Any such group is called a \newword{reflection subgroup} of $W$.
 Later in  \cite{Dyer93}, Dyer defines a certain total order on $T$ as follows using rank 2 reflection subgroups (see also \cite[\S 5.1]{Bjorner-Brenti05}).  A total order $\prec$ on $T$ is a \newword{reflection ordering} if its restriction to every dihedral reflection subgroup $W'\subseteq W$ with generators $\chi(W')=\{r,t\}$ is one of the two \newword{string orders}:
\[
  r \prec rtr \prec rtrtr \prec \cdots \prec trt \prec t
  \quad \text{or its reverse.}
\]
Reflection orderings exist for every Coxeter system, even in the infinite case \cite{Dyer93}. For the symmetric group $S_n$, the condition says: for all $1\leq a < b < c\leq n$, the transposition $t_{ac}$ lies $\prec$-between $t_{ab}$ and $t_{bc}$. For example, 
\begin{equation}\label{eqn:S4_reflection_order_example}
t_{12} \prec t_{13} \prec t_{23} \prec t_{14} \prec t_{24} \prec t_{34}
\end{equation}
is a reflection ordering of the transpositions of $S_4$.

When $W$ is finite, one way to select a reflection ordering is to choose a reduced word of $w_0 = s_1\dots s_N$ and define the ordering 
\[s_1 \prec s_1s_2s_1 \prec \dots \prec s_1\dots s_{N-1}s_Ns_{N-1}\dots s_1.
\]
By \cite[Chapter 5, Exercise 20]{Bjorner-Brenti05}, the above ordering is a reflection ordering.

An \newword{initial section} of a reflection ordering $\prec$ is a set of the form $\{ t : t \prec t_0 \}$ for some $t_0\in T$.  We also allow the sets $\emptyset$ and $T$ to be initial sections. In \cite[Lemma 2.11]{Dyer93}, Dyer proves the following characterization of (finite) initial sections for arbitrary Coxeter groups.

\begin{lemma}\cite[Lemma 2.11]{Dyer93}
\label{prop:biclosed}
Let $A \subseteq T$ be a finite set. Then $A = N(x)$ for some $x \in W$ if and only if $A$ is an initial section of some reflection ordering.
\end{lemma}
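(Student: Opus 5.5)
The statement is Dyer's lemma: for finite $A\subseteq T$, $A=N(x)$ for some $x\in W$ if and only if $A$ is an initial section of some reflection ordering. The plan is to prove the two directions separately, working throughout with the characterization of inversion sets as finite \emph{biclosed} sets. A set $A$ is \emph{closed} if, for every dihedral reflection subgroup $W'$, $A\cap W'$ is closed under the string order. Concretely, whenever $r,t\in A$, every reflection lying strictly between $r$ and $t$ in the string order of $\langle r,t\rangle$ is also in $A$. We call $A$ biclosed if both $A$ and $T\setminus A$ are closed.

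\textbf{Initial section $\Rightarrow$ inversion set.} Let $A$ be a finite initial section of $\prec$. Then both $A$ and $T\setminus A$ meet every dihedral string in an initial and a final segment respectively, so $A$ is biclosed. It remains to show that a finite biclosed set is $N(x)$ for some $x$, and we argue by induction on $|A|$. If $A\neq\emptyset$, we claim that $A$ contains a simple reflection $s$.

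\begin{itemize}
\item Take $t\in A$ of minimal root height and write $\alpha_t=$ a positive combination of simple roots.
\item If $t\notin S$, there is $s\in S$ with $s(\alpha_t)$ positive and of smaller height. Then $t$ lies in the string of the dihedral subgroup $\langle s, sts\rangle$ strictly between $s$ and $sts$.
\item Coclosure of $A$ then forces $s\in A$ or $sts\in A$.
\item The case $sts\in A$ contradicts the minimality of the height of $t$, so $s\in A$.
\end{itemize}

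Next set $A'=s(A\setminus\{s\})s$. We check that $A'$ is again finite and biclosed, using the fact that conjugation by $s$ permutes $T\setminus\{s\}$ and preserves dihedral strings, with the orientation handled through $s$. By induction $A'=N(y)$. The cocycle identity \eqref{eqn:cocycleR} (in its left form) then gives $A=N(sy)$, with $\ell(sy)=\ell(y)+1$.

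\textbf{Inversion set $\Rightarrow$ initial section.} Let $x=s_1\cdots s_k$ be reduced. The reflections $t_i=s_1\cdots s_{i-1}s_is_{i-1}\cdots s_1$ enumerate $N(x)$. The task is to extend this list to a full reflection ordering of $T$. In the finite case, extend the reduced word of $x$ to a reduced word of $w_0$ and use the construction recalled above (\cite[Ch.~5, Ex.~20]{Bjorner-Brenti05}); the first $k$ reflections are then exactly $N(x)$.

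For infinite $W$ we use Dyer's existence theorem for reflection orderings in a relative form. Choose a linear functional $f$ on $V$ that is positive on the simple roots, and order reflections by the slope-type ratio determined by $f$ and a second functional. We would try to take the second functional to be generic, so that it is negative on $\alpha_t$ for $t\in N(x)$ and positive on $\alpha_t$ for the remaining reflections. Such a functional exists: pull back a generic positive functional $g$ along $x^{-1}$, that is, set $g'=g\circ x^{-1}$. Then $g'(\alpha_t)<0$ exactly when $x^{-1}\alpha_t\in\Phi^-$, i.e.\ exactly when $t\in N(x)$. Ordering $T$ by the value of $g'(\alpha_t)/f(\alpha_t)$ then puts $N(x)$ first.

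Two points remain to be checked for this ordering:
\begin{itemize}
\item The ratio order on a dihedral subgroup is a string order. This holds because the roots of a rank-2 subsystem lie in a plane and are linearly ordered by angle.
\item Ties can be broken generically, since $T$ is countable.
\end{itemize}

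\textbf{Main obstacle.} I expect the hardest step to be showing that every dihedral restriction really is a string order in the infinite, non-crystallographic setting, where root strings accumulate toward limit rays.
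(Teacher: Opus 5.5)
\textbf{Verdict.} Your proposal is correct in outline. It takes a genuinely different, self-contained route, because the paper gives no proof of this lemma. It quotes the result from Dyer. The only argument it sketches is the finite-$W$ construction of an $N(x)$-initial ordering from a reduced word of $w_0$ that begins with a reduced word of $x$.

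\textbf{Comparison of routes.} For \emph{initial section $\Rightarrow$ inversion set}, you pass through finite biclosed sets. You find a simple reflection by height-minimality plus coclosure, conjugate by $s$, and apply the cocycle identity. This is uniform in $W$ and covers a direction the paper never discusses. Even for finite $W$, that direction does not follow from the $w_0$ construction, which only produces \emph{some} reflection orderings. For the converse, your finite case is exactly the paper's remark. For infinite $W$ you replace the citation by an explicit ordering $\phi(t)=g(x^{-1}\alpha_t)/f(\alpha_t)$, in the spirit of the geometric constructions of reflection orderings. The citation buys brevity and full generality. Your route buys an explicit, checkable ordering, with the biclosed characterization as a by-product.

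\textbf{Three details to tighten.}

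First, your stated main obstacle is not one. Take a dihedral reflection subgroup with canonical generators $r,t$. The points $\alpha/f(\alpha)$ for its positive roots lie, in string order, on the segment from $\alpha_r/f(\alpha_r)$ to $\alpha_t/f(\alpha_t)$. The functional $g\circ x^{-1}$ is affine on that segment, so $\phi$ is monotone along it, whatever accumulation happens at limit rays.

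Second, monotone becomes strictly monotone only if $\phi$ has no ties, and this must be arranged when choosing $g$, not fixed by a later tie-break. If $g\circ x^{-1}$ were proportional to $f$ on some dihedral plane, all its reflections would tie, and an arbitrary tie-break need not give a string order. When $S$ is finite, take $g$ positive on the simple roots and off the countably many hyperplanes $g\bigl(x^{-1}(f(\alpha_t)\alpha_r-f(\alpha_r)\alpha_t)\bigr)=0$ for $r\neq t$; this works. For infinite $S$, break ties instead by a lexicographic order on $V$, which is still monotone along segments.

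Third, the paper defines an initial section as $\{t: t\prec t_0\}$ (or $\emptyset$, $T$). A downward-closed subset of an infinite reflection ordering need not have this form, so you must exhibit $t_0$. Your ordering does provide it:
\begin{itemize}
\item For $t\notin N(x)$, set $\beta=x^{-1}\alpha_t\in\Phi^+$. Minimizing $\phi$ then means maximizing $f\circ x$ at the point $\beta/g(\beta)$.
\item You may maximize over all of $\Phi^+$, since roots with $x\beta\in\Phi^-$ give negative values and do not interfere.
\item These points lie in the simplex with vertices $\alpha_s/g(\alpha_s)$. So the maximum is attained at some $\alpha_s$, which has $x\alpha_s\in\Phi^+$ when $x\neq w_0$.
\item Hence $T\setminus N(x)$ has least element $t_0=xsx^{-1}$.
\end{itemize}
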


We call a reflection ordering having $N(x)$ as an initial section \newword{$N(x)$-initial}. By Lemma~\ref{prop:biclosed}, $N(x)$-initial orderings exist for every $x$ in every Coxeter group. When $W$ is finite they are easy to produce explicitly. Given a reduced word $w_0 = s_1 s_2 \cdots s_N$, the sequence of prefix conjugates
\[
  t_k \;=\; s_1 s_2 \cdots s_{k-1}\, s_k\, s_{k-1} \cdots s_2 s_1,
  \qquad 1 \le k \le N,
\]
lists $T$ without repetition.  The ordering $t_1 \prec t_2 \prec \cdots \prec t_N$ is a reflection ordering whose initial sections are exactly the sets $N(s_1 \cdots s_k)$. Consequently, an $N(x)$-initial reflection ordering can be read off any reduced word of $w_0$ beginning with a reduced word of $x$. The ordering of $S_4$ displayed in Equation \eqref{eqn:S4_reflection_order_example} is $N(x)$-initial for $x = 3214$ with associated reduced word $w_0 = (s_1 s_2 s_1) s_3 s_2 s_1$.

\subsection{EL-labelings}
Let $P$ be a finite graded poset with $\hat0$ and $\hat1$, and let
$\lambda$ be a map from the cover relations of $P$ to a totally ordered set $(\Lambda, \prec)$. A maximal chain
$p = c_0 \lessdot c_1 \lessdot \cdots \lessdot c_k = q$ of an interval
$[p,q]$ is \newword{increasing} if $\lambda(c_0 \lessdot c_1) \prec \lambda(c_1 \lessdot c_2) \prec \cdots \prec \lambda(c_{k-1} \lessdot c_k)$, and maximal chains of $[p,q]$ are compared in the lexicographic order on their label sequences. 

\begin{definition}[\cite{BW82,BW83}]
\label{def:ELlabeing}
The labeling $\lambda$ is an \newword{EL-labeling} if the following conditions are satisfied:
\begin{enumerate}
    \item every interval $[p,q]$ of $P$ has a
unique increasing maximal chain, and
\item this chain is lexicographically first among the maximal chains of $[p,q]$.
\end{enumerate}
\end{definition}
The \newword{order complex} of a poset is the simplicial complex whose faces are its chains. We call a bounded poset \newword{shellable} (respectively \newword{Cohen-Macaulay}) if the order complex of its proper part is shellable (respectively Cohen-Macaulay). A bounded graded poset admitting an EL-labeling is called \newword{EL-shellable}. In particular, EL-shellable posets are shellable \cite{BW82,BW83} and hence Cohen-Macaulay \cite{Stanley96}.

The key input to our proofs is Dyer's uniform EL-labeling of Bruhat intervals, which is valid for every Coxeter system.
 
\begin{theorem}[{\cite{Dyer93}}]\label{thm:dyer}
Let $\prec$ be any reflection ordering of $T$. Then the left labeling
$\lambda_L(u \lessdot v) = vu^{-1}$ is an EL-labeling of every interval of
$(W, \le)$.
\end{theorem}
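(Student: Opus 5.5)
We first reduce to right labels. Since $w\mapsto w^{-1}$ is an automorphism of Bruhat order and reflections are involutions, $\lambda_R(u^{-1}\lessdot v^{-1}) = uv^{-1} = (vu^{-1})^{-1} = vu^{-1} = \lambda_L(u\lessdot v)$. So inversion carries the left-labeled interval $[u,v]$ isomorphically, label by label, onto the right-labeled interval $[u^{-1},v^{-1}]$. It therefore suffices to show that $\lambda_R(u\lessdot v)=u^{-1}v$ is an EL-labeling for an arbitrary reflection ordering $\prec$. This is the form in which Dyer's argument is usually stated; see also \cite[\S 5]{Bjorner-Brenti05}.

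For existence of an increasing chain together with lexicographic minimality, I would argue by induction on $\ell(v)-\ell(u)$, using the dihedral reflection subgroups. The local input is a rank-two exchange. Suppose $u\lessdot ut\lessdot utt'$ with $t\succ t'$. Let $W'=\langle t,t'\rangle$, a dihedral reflection subgroup whose generators $\chi(W')$ are obtained by Dyer's theorem. The Bruhat order on the coset $uW'$ restricted to the interval $[u,utt']$ is a dihedral Bruhat interval of rank $2$, because Bruhat order on a coset of a reflection subgroup is compatible with its own Bruhat order. In that interval the two maximal chains carry labels forming pairs in the string order. Since $\prec$ restricts to a string order on $W'$, exactly one of the two chains is increasing, and it is lexicographically smaller. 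Repeatedly replacing descents this way, in a process that terminates because the first label strictly decreases in $\prec$ on the finite set of labels appearing in $[u,v]$, shows that the lexicographically first maximal chain of $[u,v]$ has no descents. Hence it is increasing.

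The main obstacle is uniqueness of the increasing chain. A purely local argument is delicate here, because an increasing chain need not begin with the $\prec$-smallest atom label unless one already knows the lexicographic structure. I would follow Dyer's route through the Bruhat graph and $R$-polynomials. Define $\widetilde R_{u,v}(q)=\sum_\Delta q^{|\Delta|}$, where $\Delta$ runs over directed paths $u=x_0\to x_1\to\cdots\to x_k=v$ in the Bruhat graph with $x_i=x_{i-1}t_i$, $\ell(x_i)>\ell(x_{i-1})$, and $t_1\prec\cdots\prec t_k$. The plan is to show that this sum satisfies the standard recursion of the $\widetilde R$-polynomials: for $s\in S$ with $vs<v$, one has $\widetilde R_{u,v}=\widetilde R_{us,vs}$ if $us<u$, and $\widetilde R_{u,v}=\widetilde R_{us,vs}+q\widetilde R_{u,vs}$ otherwise. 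To do this, I would replace $\prec$ by the reflection ordering conjugated by $s$ and rotated so that $s$ is first or last; this preserves the string-order property, and one checks that it does not change the path counts. Peeling off $s$ then gives the recursion. Consequently the sum is independent of $\prec$ and equals the classical $\widetilde R_{u,v}$, whose coefficient of $q^{\ell(v)-\ell(u)}$ is $1$. Paths of length exactly $\ell(v)-\ell(u)$ are precisely maximal chains of $[u,v]$, so there is exactly one increasing maximal chain. Combined with the previous paragraph, this is the lexicographically first one, and $\lambda_R$, hence $\lambda_L$, is an EL-labeling.
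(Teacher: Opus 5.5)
The paper gives no proof of this statement. It is imported from \cite{Dyer93}, and the only argument in the text is the inversion remark converting left labels into right labels, which is your first paragraph. The rest of your proposal reconstructs Dyer's proof, and its decisive step does not go through as described: the $\widetilde R$-recursion for an \emph{arbitrary} reflection ordering. Conjugating $\prec$ by a right descent $s$ of $v$ and rotating $s$ to an end works only when $s$ is already the first (or last) element of $\prec$. In that case, moving $s$ to the other end and ordering the remaining reflections by $sts\prec' st's\iff t\prec t'$ gives a reflection ordering. Right multiplication by $s$ reverses exactly the edges labelled $s$ (Lemma~\ref{lem:switch}), so it matches $\prec$-increasing paths $u\to v$ with $\prec'$-increasing paths $us\to vs$ and $u\to vs$, and this gives the recursion. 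When $s$ sits in the middle of $\prec$, there is in general no such ordering, so the string-order property is \emph{not} preserved. Take the ordering \eqref{eqn:S4_reflection_order_example} and $s=s_2=t_{23}$. Conjugation turns the other five reflections into $t_{13},t_{12},t_{14},t_{34},t_{24}$. The triple $1<2<3$ then forces $t_{23}$ before $t_{13}$, while $2<3<4$ forces it after $t_{24}$; a cyclic rotation of the original sequence fails too. Since $v=2413$ has $s_2$ as its only right descent, your induction step cannot be carried out for this pair $(\prec,v)$.

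Your fallback is that one may switch to a convenient ordering because this ``does not change the path counts''. That is precisely the statement that the number of $\prec$-increasing paths from $u$ to $v$ is independent of $\prec$. This invariance is the substantive content of Dyer's theorem, and you then deduce it from the recursion, so the argument is circular. What is missing is an independent proof of the invariance, or a derivation of the recursion that works for every reflection ordering.

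There is also a smaller gap in the rank-two step. Dyer's coset isomorphism identifies Bruhat graphs; it is based at the minimal element of the coset and preserves edge directions but not lengths. By itself it does not show that the second middle element of $[u,utt']$ lies in $u\langle t,t'\rangle$. You need the separate fact that a length-two Bruhat interval lies in a single coset of a dihedral reflection subgroup, followed by a length count showing it becomes a rank-two interval there. Finally, replacing a descent lowers the label at that position, not the first label. It is cleaner to start from a lexicographically first chain, as in the proof of Proposition~\ref{prop:mainmid}.
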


Theorem~\ref{thm:dyer} holds for the right labeling
$\lambda_R$ as well. Indeed, $u \mapsto u^{-1}$ is an automorphism of the Bruhat order carrying $[w_1,w_2]$ to $[w_1^{-1}, w_2^{-1}]$, and it converts right labels into left labels. To see this, note that if $u \lessdot v$, then
$\lambda_L(u^{-1} \lessdot v^{-1}) = v^{-1}u = (u^{-1}v)^{-1} =
\lambda_R(u \lessdot v)$, since reflections are involutions. In particular, for each reflection ordering the two labelings of a fixed interval have the same increasing and lexicographically-first chains after the transformation. In this paper we are going to stick to right labelings.

\section{Shifted intervals}\label{section:shift_interval}
In this section, we introduce our main object of study and give several examples of shifted intervals. Although our main result concerns only shifted lower intervals, many of the tools we develop apply to arbitrary shifted intervals.

\begin{definition}\label{def:shifted}
For $w_1,w_2,x \in W$ with $w_1 \leq w_2$, the \newword{shifted Bruhat interval} is
\[
  \sywx \;=\; \{\, ux^{-1} : w_1 \leq u \leq w_2 \,\},
\]
partially ordered by the restriction of the Bruhat order of $W$. We refer to the bijection $u \mapsto ux^{-1}$ from $[w_1,w_2]$ to $\sywx$ as the \newword{shift by $x$}. When $w_1=e$, we call $\swx$ a \newword{shifted lower interval}.
\end{definition}

The shift interacts well with reflections: if $u$ and $ut$ differ by the reflection $t \in T$, then their images $ux^{-1}$ and $(ut)x^{-1} = ux^{-1}r$ differ by the reflection $r = xtx^{-1}$, and any
two elements of $W$ differing by a reflection are Bruhat-comparable. So pairs differing by a reflection remain comparable after the shift, even though comparability of general pairs may be lost. Note that the direction of the comparison may reverse, and the length gap may change. 

\begin{figure}[h]
 \begin{center}
  \vspace{3.7mm}
\begin{tikzpicture}[scale=0.85,
  every node/.style={font=\small},
  elt/.style={inner sep=1.5pt},
  lab/.style={font=\scriptsize, fill=white, inner sep=0.8pt},
  flip/.style={red, thick},
  fliplab/.style={font=\scriptsize, fill=white, inner sep=0.8pt, text=red}]
 \begin{scope}
  \node[elt] (b1234) at (0,0)        {$1234$};
  \node[elt] (b1243) at (-2,1.67)    {$1243$};
  \node[elt] (b1324) at (0,1.67)     {$1324$};
  \node[elt] (b2134) at (2,1.67)     {$2134$};
  \node[elt] (b1342) at (-2,3.33)    {$1342$};
  \node[elt] (b2143) at (0,3.33)     {$2143$};
  \node[elt] (b2314) at (2,3.33)     {$2314$};
  \node[elt] (b2341) at (0,5)        {$2341$};
  \draw        (b1234) -- node[lab,pos=.5]     {$t_{34}$} (b1243);
  \draw[flip]  (b1234) -- node[fliplab,pos=.5] {$t_{23}$} (b1324);
  \draw[flip]  (b1234) -- node[fliplab,pos=.5] {$t_{12}$} (b2134);
  \draw        (b1243) -- node[lab,pos=.5]     {$t_{24}$} (b1342);
  \draw[flip]  (b1243) -- node[fliplab,pos=.3] {$t_{12}$} (b2143);
  \draw        (b1324) -- node[lab,pos=.3]     {$t_{34}$} (b1342);
  \draw[flip]  (b1324) -- node[fliplab,pos=.3] {$t_{13}$} (b2314);
  \draw        (b2134) -- node[lab,pos=.3]     {$t_{34}$} (b2143);
  \draw[flip]  (b2134) -- node[fliplab,pos=.5] {$t_{23}$} (b2314);
  \draw        (b1342) -- node[lab,pos=.5]     {$t_{14}$} (b2341);
  \draw        (b2143) -- node[lab,pos=.5]     {$t_{24}$} (b2341);
  \draw        (b2314) -- node[lab,pos=.5]     {$t_{34}$} (b2341);
  \node at (0,-0.75) {$[e,\,2341]$};
 \end{scope}
 \begin{scope}[shift={(5.5,0)}]
  \node[elt] (m3214) at (0,0)        {$3214$};
  \node[elt] (m4213) at (-2,1.67)    {$4213$};
  \node[elt] (m2314) at (0,1.67)     {$2314$};
  \node[elt] (m3124) at (2,1.67)     {$3124$};
  \node[elt] (m4312) at (-2,3.33)    {$4312$};
  \node[elt] (m4123) at (0,3.33)     {$4123$};
  \node[elt] (m1324) at (2,3.33)     {$1324$};
  \node[elt] (m4321) at (0,5)        {$4321$};
  \draw[->]        (m3214) -- (m4213);
  \draw[->]        (m4213) -- (m4312);
  \draw[->]        (m2314) -- (m4312);
  \draw[->]        (m4312) -- (m4321);
  \draw[->]        (m3124) -- (m4123);
  \draw[->]        (m4123) -- (m4321);
  \draw[->]        (m1324) -- (m4321);
  \draw[flip,->]   (m2314) -- (m3214);
  \draw[flip,->]   (m3124) -- (m3214);
  \draw[flip,->]   (m4123) -- (m4213);
  \draw[flip,->]   (m1324) -- (m2314);
  \draw[flip,->]   (m1324) -- (m3124);
  \node at (0,-0.75) {$[e,\,2341]\,x^{-1}$};
 \end{scope}
 \begin{scope}[shift={(10,0)}]
  \node[elt] (a1324) at (0,0)     {$1324$};
  \node[elt] (a2314) at (-1,1)    {$2314$};
  \node[elt] (a3124) at (1,1)     {$3124$};
  \node[elt] (a3214) at (-0.55,2) {$3214$};
  \node[elt] (a4123) at (1,2)     {$4123$};
  \node[elt] (a4213) at (0.2,3)   {$4213$};
  \node[elt] (a4312) at (0.2,4)   {$4312$};
  \node[elt] (a4321) at (0.2,5)   {$4321$};
  \draw (a1324) -- node[lab,pos=.5] {$t_{13}$} (a2314);
  \draw (a1324) -- node[lab,pos=.5] {$t_{12}$} (a3124);
  \draw (a2314) -- node[lab,pos=.5] {$t_{12}$} (a3214);
  \draw (a3124) -- node[lab,pos=.5] {$t_{23}$} (a3214);
  \draw (a3124) -- node[lab,pos=.5] {$t_{14}$} (a4123);
  \draw (a3214) -- node[lab,pos=.5] {$t_{14}$} (a4213);
  \draw (a4123) -- node[lab,pos=.5] {$t_{23}$} (a4213);
  \draw (a4213) -- node[lab,pos=.5] {$t_{24}$} (a4312);
  \draw (a4312) -- node[lab,pos=.5] {$t_{34}$} (a4321);
  \node at (0.2,-0.75) {$[e,2341]\,(3214)^{-1}$};
 \end{scope}
\end{tikzpicture}
    \captionsetup{width=1.0\linewidth}
  \captionof{figure}{The interval $[e,2341]$ with flipping edges in red
  (left), the shift of its elements by $x = 3214$ with edges directed
  toward the Bruhat-larger element (middle), and the Hasse diagram of
  $[e,2341]\,(3214)^{-1}$ (right). See Example~\ref{ex:longstem}.}
  \label{fig:longstemex}
 \end{center}
\end{figure}

\begin{example}\label{ex:longstem}
Let $W = S_4$, $w = 2341$ and $x = 3214$, so that $N(x) = \{t_{12}, t_{13}, t_{23}\}$. The left and middle panel of Figure~\ref{fig:longstemex} show the passage from $[e, w]$ to $\swx$.

The left panel is the ordinary interval $[e, 2341]$, with each cover
$u \lessdot ut$ marked by its right label $t$. An edge is drawn in red
when the shift by $x$ reverses its Bruhat comparison. 

The middle panel
applies the shift where each vertex $u$ is replaced by $ux^{-1}$, keeping the
layout.  The direction of each edge is adjusted to point from the Bruhat-smaller to the
Bruhat-larger element. Black edges keep their orientation while red edges
reverse. 

The length gaps can also change: the pair $2314$ and $2341$ in the original interval had a length difference of $1$, their shifts $1324$ and $4321$ have length difference of $5$.
\end{example}

To picture a shifted interval we will draw the Hasse diagram of $\sywx$
(or $\swx$) alongside that of the original interval $[w_1,w_2]$: the shift
preserves the vertex set up to relabeling, but it reorients some edges and stretches others, so the two Hasse diagrams can look drastically
different. 

The right panel of Figure~\ref{fig:longstemex} is the Hasse diagram of $[e,2341]\,(3214)^{-1}$, drawn by the usual length of the Bruhat order: the three stretched comparisons are no longer cover relations in the induced order, so the twelve edges of the middle panel become nine covers.

Example \ref{ex:longstem} shows how far shifting can change the poset structure of a
Bruhat interval. The gaps in length remain odd (two elements differing by a
reflection have lengths of opposite parity), but covers need not survive, and the resulting poset looks nothing like a standard Bruhat interval.  

For another example, the rank-$2$ interval $[4213, 4321]$ of $[e,2341]\,(3214)^{-1}$ is a chain, violating thinness, and the whole poset is an interval of rank $5$ with a unique coatom. Consequently $[e,2341]\,(3214)^{-1}$ is not isomorphic to any Bruhat interval.  For finite Coxeter groups, every twisted order is a right translate of the Bruhat order and hence $[e,2341]\,(3214)^{-1}$ is not an interval of a twisted order either.  Moreover since every interval of a tilted Bruhat order is Eulerian, hence thin \cite[Corollary~6.5]{BFP}, it is not isomorphic to a tilted Bruhat interval of any Weyl group.

Even the global shape of the rank sizes can be disturbed, as the next example shows.

\begin{example}\label{ex:bowtie}
Let $W = S_4$ and $w = x = 2413$, so that $N(x) = \{t_{12}, t_{14}, t_{34}\}$. The interval $[e,2413]$ is isomorphic to the Boolean lattice on three elements. Figure~\ref{fig:bowtie} shows the passage to $[e,2413]\,(2413)^{-1}$: eight of the twelve covers flip, and the resulting poset is graded of rank $5$ with rank sequence $1,2,1,1,2,1$ from bottom to top. In particular, the rank generating function of a shifted lower interval need not be unimodal.
\end{example}

Lower Bruhat intervals behave very differently: Bj\"orner and Ekedahl \cite{BE09} showed that the rank numbers of $[e,w]$ in a crystallographic Coxeter group satisfy $f_i \le f_j$ whenever $0 \le i < j \le \ell(w) - i$, so in particular they increase along the bottom half. The shift can destroy even this coarse constraint on the shape.

\begin{figure}[h]
 \begin{center}
  \vspace{3.7mm}
\begin{tikzpicture}[scale=0.85,
  every node/.style={font=\small},
  elt/.style={inner sep=1.5pt},
  lab/.style={font=\scriptsize, fill=white, inner sep=0.8pt},
  flip/.style={red, thick},
  fliplab/.style={font=\scriptsize, fill=white, inner sep=0.8pt, text=red}]
 \begin{scope}
  \node[elt] (b1234) at (0,0)        {$1234$};
  \node[elt] (b1243) at (-2,1.67)    {$1243$};
  \node[elt] (b1324) at (0,1.67)     {$1324$};
  \node[elt] (b2134) at (2,1.67)     {$2134$};
  \node[elt] (b1423) at (-2,3.33)    {$1423$};
  \node[elt] (b2143) at (0,3.33)     {$2143$};
  \node[elt] (b2314) at (2,3.33)     {$2314$};
  \node[elt] (b2413) at (0,5)        {$2413$};
  \draw        (b1234) -- node[lab,pos=.5]     {$t_{34}$} (b1243);
  \draw[flip]  (b1234) -- node[fliplab,pos=.5] {$t_{23}$} (b1324);
  \draw        (b1234) -- node[lab,pos=.5]     {$t_{12}$} (b2134);
  \draw[flip]  (b1243) -- node[fliplab,pos=.5] {$t_{23}$} (b1423);
  \draw        (b1243) -- node[lab,pos=.3]     {$t_{12}$} (b2143);
  \draw[flip]  (b1324) -- node[fliplab,pos=.3] {$t_{24}$} (b1423);
  \draw[flip]  (b1324) -- node[fliplab,pos=.3] {$t_{13}$} (b2314);
  \draw        (b2134) -- node[lab,pos=.3]     {$t_{34}$} (b2143);
  \draw[flip]  (b2134) -- node[fliplab,pos=.5] {$t_{23}$} (b2314);
  \draw[flip]  (b1423) -- node[fliplab,pos=.5] {$t_{13}$} (b2413);
  \draw[flip]  (b2143) -- node[fliplab,pos=.5] {$t_{23}$} (b2413);
  \draw[flip]  (b2314) -- node[fliplab,pos=.5] {$t_{24}$} (b2413);
  \node at (0,-0.75) {$[e,\,2413]$};
 \end{scope}
 \begin{scope}[shift={(5.5,0)}]
  \node[elt] (m3142) at (0,0)        {$3142$};
  \node[elt] (m4132) at (-2,1.67)    {$4132$};
  \node[elt] (m2143) at (0,1.67)     {$2143$};
  \node[elt] (m3241) at (2,1.67)     {$3241$};
  \node[elt] (m2134) at (-2,3.33)    {$2134$};
  \node[elt] (m4231) at (0,3.33)     {$4231$};
  \node[elt] (m1243) at (2,3.33)     {$1243$};
  \node[elt] (m1234) at (0,5)        {$1234$};
  \draw[->]        (m3142) -- (m4132);
  \draw[->]        (m3142) -- (m3241);
  \draw[->]        (m4132) -- (m4231);
  \draw[->]        (m3241) -- (m4231);
  \draw[flip,->]   (m2143) -- (m3142);
  \draw[flip,->]   (m2134) -- (m4132);
  \draw[flip,->]   (m2134) -- (m2143);
  \draw[flip,->]   (m1243) -- (m2143);
  \draw[flip,->]   (m1243) -- (m3241);
  \draw[flip,->]   (m1234) -- (m2134);
  \draw[flip,->]   (m1234) -- (m4231);
  \draw[flip,->]   (m1234) -- (m1243);
  \node at (0,-0.75) {$[e,\,2413]\,x^{-1}$};
 \end{scope}
 \begin{scope}[shift={(10,0)}]
  \node[elt] (a1234) at (0,0)     {$1234$};
  \node[elt] (a2134) at (-1,1)    {$2134$};
  \node[elt] (a1243) at (1,1)     {$1243$};
  \node[elt] (a2143) at (0,2)     {$2143$};
  \node[elt] (a3142) at (0,3)     {$3142$};
  \node[elt] (a4132) at (-1,4)    {$4132$};
  \node[elt] (a3241) at (1,4)     {$3241$};
  \node[elt] (a4231) at (0,5)     {$4231$};
  \draw (a1234) -- node[lab,pos=.5] {$t_{12}$} (a2134);
  \draw (a1234) -- node[lab,pos=.5] {$t_{34}$} (a1243);
  \draw (a2134) -- node[lab,pos=.5] {$t_{34}$} (a2143);
  \draw (a1243) -- node[lab,pos=.5] {$t_{12}$} (a2143);
  \draw (a2143) -- node[lab,pos=.5] {$t_{14}$} (a3142);
  \draw (a3142) -- node[lab,pos=.5] {$t_{13}$} (a4132);
  \draw (a3142) -- node[lab,pos=.5] {$t_{24}$} (a3241);
  \draw (a4132) -- node[lab,pos=.5] {$t_{24}$} (a4231);
  \draw (a3241) -- node[lab,pos=.5] {$t_{13}$} (a4231);
  \node at (0,-0.75) {$[e,2413]\,(2413)^{-1}$};
 \end{scope}
\end{tikzpicture}
    \captionsetup{width=1.0\linewidth}
  \captionof{figure}{The interval $[e,2413]$ with flipping edges in red
  (left), the shift of its elements by $x = 2413$ with edges directed
  toward Bruhat-larger elements (middle), and the Hasse diagram of
  $[e,2413]\,(2413)^{-1}$ (right).  Note that the rank sequence $1,2,1,1,2,1$ is not
  unimodal. See Example~\ref{ex:bowtie}.}
  \label{fig:bowtie}
 \end{center}
\end{figure}

We highlight two facts about the poset $[e,2341]\,(3214)^{-1}$ of Example~\ref{ex:longstem} that are not visible from the construction.   First, although
the shift removes covers and creates new ones, every maximal chain of $[e,2341]\,(3214)^{-1}$ still has length $5$.  We will prove in Section~\ref{sub:gradedness} that shifted lower intervals are graded.  Second, under the $N(3214)$-initial reflection ordering given in Equation \eqref{eqn:S4_reflection_order_example}, every interval of this poset has a unique increasing maximal chain with respect to the right labels shown in Figure \ref{fig:longstemex}.  For example, for the whole poset the maximal chain is
$$1324 \lessdot 3124 \lessdot 3214 \lessdot 4213 \lessdot 4312 \lessdot
4321,$$ with label sequence $t_{12}, t_{23}, t_{14}, t_{24}, t_{34}$.
We prove in Section \ref{section:EL_labels} that the right labels give an EL-labeling of shifted (by $x$) lower-intervals equipped with $N(x)$-initial reflection orderings.

We conclude this section with an important identity we call the \newword{switching lemma}.  The switching lemma governs which edges in a Bruhat interval get ``reversed" by shifting (note the red edges in Figure~\ref{fig:longstemex}).

\begin{lemma}[Switching lemma]\label{lem:switch}
Let $x \in W$. For every $u \in W$,
\[
  \{\, t \in T : ux^{-1}t < ux^{-1} \,\}
  \;=\; N(x) \,\Delta\, x\,\{\, r \in T : ur < u \,\}\,x^{-1}.
\]
That is, for $t \in T$, writing $r = x^{-1}tx$ (so that
$ux^{-1}t = (ur)x^{-1}$),
\[ur <u \iff
  \begin{cases}
    ux^{-1}t < ux^{-1} & \text{if } t \notin N(x),\\ 
    ux^{-1}t > ux^{-1} & \text{if } t \in N(x).
  \end{cases}
\] 
In other words, shifting by $x$ reverses the Bruhat comparison of the pair
$\{u,\, ur\}$ if and only if the label $t = xrx^{-1}$ of the shifted pair lies in $N(x)$.
\end{lemma}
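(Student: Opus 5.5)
The plan is to prove the first displayed identity directly from the cocycle identity \eqref{eqn:cocycleR}, and then read off the case statement as a reformulation. The set on the left is the right descent set of reflections of $ux^{-1}$. A standard fact is that for $t\in T$ one has $vt<v$ exactly when $\ell(vt)<\ell(v)$, because $v$ and $vt$ differ by a reflection and so are comparable. Hence $\{t\in T: vt<v\}=T_R(v)$ for every $v\in W$. With this, the left side is $T_R(ux^{-1})$, and $\{r\in T: ur<u\}=T_R(u)$.

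Next I apply \eqref{eqn:cocycleR} with the first factor $u$ and the second factor $x^{-1}$:
\[
T_R(ux^{-1}) \;=\; T_R(x^{-1})\,\Delta\,(x^{-1})^{-1}T_R(u)\,x^{-1} \;=\; T_R(x^{-1})\,\Delta\, x\,T_R(u)\,x^{-1}.
\]
Since $T_R(x^{-1})=N((x^{-1})^{-1})=N(x)$ by definition, this is exactly the claimed identity $N(x)\,\Delta\, x\{r: ur<u\}x^{-1}$.

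For the case form, I fix $t\in T$ and set $r=x^{-1}tx$, which is again a reflection. Then $t\in x\,T_R(u)\,x^{-1}$ if and only if $r\in T_R(u)$, that is, if and only if $ur<u$. Also $ux^{-1}t=urx^{-1}$.

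Now I unwind the symmetric difference.
\begin{itemize}
\item If $t\notin N(x)$, then $t\in T_R(ux^{-1})$ if and only if $r\in T_R(u)$. This says $ux^{-1}t<ux^{-1}$ if and only if $ur<u$.
\item If $t\in N(x)$, then $t\in T_R(ux^{-1})$ if and only if $r\notin T_R(u)$. So $ur<u$ if and only if $ux^{-1}t\not<ux^{-1}$.
\end{itemize}
In the second case, comparability of $ux^{-1}$ and $ux^{-1}t$ (they differ by a reflection, and $t\neq e$) turns $ux^{-1}t\not<ux^{-1}$ into $ux^{-1}t>ux^{-1}$.

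The final sentence of the lemma is the same statement. The pair $\{u,ur\}$ maps to $\{ux^{-1},ux^{-1}t\}$, and its comparison flips precisely when $t\in N(x)$.

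The only point needing care is the identification $\{t: vt<v\}=T_R(v)$, together with the convention for $T_R$ in \eqref{eqn:cocycleR}. One has to check that $T_R(v)=N(v^{-1})=\{t:\ell(tv^{-1})<\ell(v^{-1})\}=\{t:\ell(vt)<\ell(v)\}$, using $\ell(y)=\ell(y^{-1})$ and $t=t^{-1}$. The step I expect to be the main obstacle is making sure the conjugation in the cocycle identity comes out as $x(\cdot)x^{-1}$ rather than $x^{-1}(\cdot)x$ when $v=x^{-1}$. If the cited form of \eqref{eqn:cocycleR} were ambiguous, I would re-derive it quickly from the root-system criterion: $t\in T_R(y)$ iff $y(\alpha_t)\in\Phi^-$, and then track the sign of $ux^{-1}(\alpha_t)=u(\pm\alpha_{r})$, where $x^{-1}(\alpha_t)=\pm\alpha_r$ with the sign negative exactly when $t\in N(x)$.
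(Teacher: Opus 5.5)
Your proposal is correct and matches the paper's proof: you identify both descent sets with right inversion sets, apply the cocycle identity \eqref{eqn:cocycleR} with second factor $x^{-1}$ (so the conjugation comes out as $x(\cdot)x^{-1}$), and use $T_R(x^{-1}) = N(x)$. Your explicit unwinding of the symmetric difference into the case statement, using that $ux^{-1}$ and $ux^{-1}t$ are comparable, fills in a step the paper leaves to the reader.
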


\begin{proof}
For $u \in W$ and $t \in T$ we have $ut < u$ if and only if  $t \in T_R(u)$. Applying
the cocycle identity \eqref{eqn:cocycleR} to the product $ux^{-1}$ gives
$T_R(ux^{-1}) = T_R(x^{-1}) \,\Delta\, x\,T_R(u)\,x^{-1}$, and
$T_R(x^{-1}) = N(x)$ by definition.
\end{proof}

For example, in Figure~\ref{fig:longstemex} we have $x = 3214$ and $N(x) = \{t_{12},t_{13},t_{23}\}$. The red edges (the edges that switched direction going from the left panel to the middle one) were exactly those in $\{t_{12}, t_{13}, t_{23}\}$, as guaranteed by the switching lemma.

\section{Properties of shifted lower intervals}\label{section:shift_lower_intervals}
In this section we show that, for an arbitrary Coxeter group, the shifted lower interval $\swx$ has unique extrema and that all of its maximal chains have the same length. For the extrema we use the \newword{Demazure product} \cite{Dem74} together with a ``downward" counterpart that we call the \newword{opposite Demazure operator}. For gradedness we use the \newword{weak generalized lifting property} of Caselli, D'Adderio, and Marietti \cite{Caselli-Adderio-Marietti21}.

\subsection{The unique maximum and minimum of shifted Bruhat intervals}
There is an associative operation on Coxeter groups called the \newword{Demazure product} \cite{Dem74} (also $0$-\newword{Hecke} or \newword{greedy product}).

The \newword{Coxeter monoid} structure (also called the $0$-\newword{Iwahori-Hecke monoid} structure) on $W$ is defined to be the monoid generated by $S$ with a product $\star$ satisfying the Coxeter braid relations for $s\neq t$ along with the relation $s\star s=s$ for all $s\in S$ (this new relation replaces $s^2=id$ in the usual product).  This monoid product is what we refer to as the Demazure product and was first studied by Norton in \cite{No79} in the context of Hecke algebras.  It is well known that, as sets, $W=\langle S,\star\rangle$.  We say an expression $w=s_1\cdots s_k$ is reduced if $\ell(w)=k$.  Recall that if $\ell(w)=k$, then $w$ cannot be expressed with fewer than $k$ generators in $S$. The next lemma records some basic facts about the Coxeter monoid.

\begin{lemma}\cite[Lemma 1.3, Corollary 1.4]{No79}
\label{lem:monoid_properties}
Let $W$ be a Coxeter group with generating set $S$.  Then the following are true:
\begin{enumerate}
    \item Let $(s_1,\ldots,s_k)$ be a sequence of generators in $S$.  Then
    $$s_1\cdots s_k\leq s_1\star\cdots \star s_k$$ with equality if and only if $(s_1,\ldots,s_k)$ is a reduced expression.
    \item For any $s\in S$ and $w\in W$,
$$s\star w=\begin{cases} w & \text{if}\quad \ell(sw)< \ell(w)\\ sw & \text{if}\quad \ell(sw)>\ell(w).\end{cases}$$
\end{enumerate}
\end{lemma}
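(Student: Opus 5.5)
The statement just before this point is Lemma~\ref{lem:monoid_properties}, cited from Norton \cite{No79}. It has two parts: the subword comparison $s_1\cdots s_k\le s_1\star\cdots\star s_k$, and the explicit rule for $s\star w$. I plan to prove part (2) first and then derive part (1) from it by induction on $k$.

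\textbf{Part (2).} I will realize $\star$ concretely. On $W$ as a set, let each $s\in S$ act by $\pi_s(w)=sw$ if $\ell(sw)>\ell(w)$ and $\pi_s(w)=w$ otherwise.
\begin{enumerate}
\item Check that $\pi_s^2=\pi_s$. This is immediate: after one application, $w$ or $sw$ has $s$ as a left descent, so a second application does nothing.
\item Check that the $\pi_s$ satisfy the braid relations. For $s\ne t$ with $m=m(s,t)<\infty$, I will use the parabolic factorization $w=w_{I}\,{}^{I}w$ with $I=\{s,t\}$. Both $\pi_s$ and $\pi_t$ only change the dihedral factor $w_I$, and they act there as the $0$-Hecke operators of the dihedral group $W_I$. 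In $W_I$, both alternating products of length $m$ send every element to the longest element $w_I^{0}$, so the braid relation holds.
\item Conclude that $s\mapsto\pi_s$ defines an action of the Coxeter monoid. The map $a\mapsto a(e)$ is a bijection: it is onto, since each $w$ equals $\pi_{s_1}\cdots\pi_{s_k}(e)$ for a reduced word $s_1\cdots s_k$ of $w$. It is injective because two monoid elements with the same reduced word are equal.
\item Identify $s\star w$ with $\pi_s(w)$ under this bijection. Part (2) is then exactly the definition of $\pi_s$.
\end{enumerate}

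\textbf{Part (1).} I will induct on $k$. Put $u=s_2\cdots s_k$ and $v=s_2\star\cdots\star s_k$; the inductive hypothesis gives $u\le v$, with equality exactly when $(s_2,\ldots,s_k)$ is reduced. By part (2), $s_1\star v$ is either $v$ or $s_1v$, and in both cases it is the larger of the two, i.e.\ $s_1\star v=\max(v,s_1v)$. The lifting property then gives $s_1u\le \max(v,s_1v)=s_1\star v$.

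For the equality statement:
\begin{itemize}
\item If $(s_1,\ldots,s_k)$ is reduced, then so is $(s_2,\ldots,s_k)$, so $u=v$ and $\ell(s_1u)>\ell(u)$. Part (2) gives $s_1\star u=s_1u$, so equality holds.
\item If equality holds, then $\ell(s_1\cdots s_k)=\ell(s_1\star\cdots\star s_k)$. The right side is at most $k$ and increases by one only when a generator is actually multiplied on. I will argue that equality forces this to happen at every step: if some step were absorbed, the product would have length below $k$, while $s_1\cdots s_k$ differs from it. This needs a short parity and length comparison, which I still have to pin down.
\end{itemize}

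The braid-relation check in part (2) is the main obstacle, and the reduction to the dihedral parabolic subgroup is the step I would verify most carefully.
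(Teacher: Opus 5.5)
\textbf{Assessment.} Your part (2) and the inequality in part (1) are sound. The dihedral reduction through $w=w_I\,{}^Iw$ gives the braid relations, and $s_1u\le\max(v,s_1v)=s_1\star v$ follows from the lifting property as you say. The \emph{if} half of the equality clause is also fine. (The paper itself gives no proof here; it only cites Norton.)

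\textbf{The gap: the converse is false.} The \emph{only if} half, which you left open, cannot be closed, because read literally it is false. Take $k=3$ and $s_1=s_2=s_3=s$. Then $s_1s_2s_3=s=s\star s\star s$, but $(s,s,s)$ is not reduced. More generally, replacing one letter $s$ of a reduced word by $s,s,s$ changes neither the ordinary product (since $s^2=e$) nor the Demazure product (since $s\star s=s$).

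Your parity idea shows exactly where the argument stops working. If $j$ letters are absorbed in the right-to-left computation, then $\ell(s_1\star\cdots\star s_k)=k-j$, while $\ell(s_1\cdots s_k)\equiv k \pmod 2$. So parity separates the two products only when $j$ is odd; in the example above $j=2$.

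What your step count does prove is the correct converse: $\ell(s_1\star\cdots\star s_k)\le k$, with equality if and only if $(s_1,\ldots,s_k)$ is reduced, in which case $s_1\star\cdots\star s_k=s_1\cdots s_k$. That length version is what you should state and prove. The paper never invokes the literal converse, so nothing downstream is affected.

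\textbf{A smaller omission in step 3.} Matsumoto's theorem alone does not give injectivity of $a\mapsto a(e)$, because a monoid element is the class of an arbitrary, possibly non-reduced, word. You also need that every such class equals the class of some reduced word for $a(e)$. This follows by induction on word length. Write $a=s\star a'$, with $a'$ the class of a reduced word for $w'=a'(e)$.
\begin{itemize}
\item If $\ell(sw')>\ell(w')$, prepend $s$ to that reduced word.
\item If $\ell(sw')<\ell(w')$, the exchange property and Matsumoto's theorem let you represent $a'$ by a reduced word beginning with $s$. Then $s\star s=s$ absorbs the new letter, so $a=a'$.
\end{itemize}
In both cases the resulting reduced word has product $\pi_s(w')=a(e)$, and injectivity follows from Matsumoto as you intended.
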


As a consequence, there is a very nice interpretation of $w \star u$ for any $w,u\in W$ in terms of Bruhat intervals. 

\begin{proposition}\cite[Lemma 1]{He09}, \cite[Proposition 8]{Kenny14}\label{prop:interval_product}
For any $w,u\in W$, the lower interval $$[e,w\star u]=\{yz \ |\ y\in [e,w], z\in[e,u]\}.$$
\end{proposition}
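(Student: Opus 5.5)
Both inclusions will be proved by induction on $\ell(u)$, using the two descriptions of the Demazure product in Lemma~\ref{lem:monoid_properties}.

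\textbf{The inclusion $\supseteq$.} Let $y\le w$ and $z\le u$; we must show $yz\le w\star u$. Fix a reduced word $u=s_1\cdots s_k$. By the subword property, $z$ has a reduced word that is a subword $s_{i_1}\cdots s_{i_m}$ of it. Then $yz$ equals the product of a reduced word of $y$ followed by $s_{i_1},\dots,s_{i_m}$. By Lemma~\ref{lem:monoid_properties}(1), $yz$ lies below the Demazure product of that sequence of generators, namely
\[
y\star s_{i_1}\star\cdots\star s_{i_m}.
\]
It remains to show this is at most $w\star s_1\star\cdots\star s_k=w\star u$. This uses two monotonicity facts for the Demazure product, applied letter by letter:
\begin{itemize}
\item[(a)] if $a\le b$ then $a\star s\le b\star s$;
\item[(b)] $b\le b\star s$ (from the right-handed version of Lemma~\ref{lem:monoid_properties}(2)), which handles the skipped letters.
\end{itemize}
Fact (a) is the lifting property. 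By Lemma~\ref{lem:monoid_properties}(2) applied on the right, $a\star s=\max(a,as)$. If $bs>b$, then $\max(a,as)\le bs$ by the lifting property. If $bs<b$, then $b\star s=b$, and the lifting property gives $a\le b$ and $as\le b$.

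\textbf{The inclusion $\subseteq$.} Induct on $\ell(u)$; the case $u=e$ is trivial. Write $u=u's$ with $\ell(u')<\ell(u)$, so $w\star u=(w\star u')\star s$. Let $v\le w\star u$.
\begin{itemize}
\item If $(w\star u')s<w\star u'$, then $w\star u=w\star u'$, and induction gives $v=yz'$ with $z'\le u'\le u$.
\item Otherwise $w\star u=(w\star u')s$ with $s$ a right ascent. By the lifting property, $v\le w\star u'$ or $vs\le w\star u'$. In the first case, apply induction directly. In the second, write $vs=yz'$ with $z'\le u'$; then $v=y(z's)$. Since $u's=u$ is reduced, $z's\le u$ by the subword property, whichever of $z'<z's$ or $z's<z'$ holds.
\end{itemize}

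\textbf{Main obstacle.} The key step is the monotonicity fact (a), i.e.\ the careful application of the lifting property; everything else is bookkeeping with subwords.
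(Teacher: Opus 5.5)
Your proof is correct. Facts (a) and (b) follow, as you say, from the right-handed form of Lemma~\ref{lem:monoid_properties}(2) and the lifting property; the right-handed form is legitimate because the defining relations of the monoid are invariant under reversing words. The letter-by-letter comparison then gives $y\star s_{i_1}\star\cdots\star s_{i_m}\le w\star u$. In the reverse inclusion, for $v\le (w\star u')s$ with $s$ an ascent, the dichotomy ``$v\le w\star u'$ or $vs\le w\star u'$'' is exactly what lifting provides.

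The paper does not prove this proposition at all; it takes it from \cite{He09} and \cite{Kenny14}. So rather than offering a competing route, you are supplying a self-contained version of the standard inductive argument. It uses only tools the paper already relies on: the analogous monotonicity for the opposite Demazure operator is proved the same way in Lemma~\ref{lem:bstarmono}.

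Your argument can be compressed. Lifting gives $[e,b]\cup[e,b]s=[e,b\star s]$ for all $b\in W$ and $s\in S$. Taking $b=u'$ gives $[e,u]=[e,u']\cup[e,u']s$. Writing $[e,w][e,u]$ for the set of products, induction then yields $[e,w][e,u]=[e,w\star u']\cup[e,w\star u']s=[e,w\star u]$. This proves both inclusions at once, without Lemma~\ref{lem:monoid_properties}(1).

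One caution if your argument is meant to replace the citation. The proof of Proposition~\ref{prop:max} uses a sharper fact from He's lemma: $w\star u=yu$ for some $y\le w$, with all of $u$ as the second factor. This does not follow immediately from the set equality, which only produces $w\star u=yz$ with $z\le u$. Your induction extends to it with one extra step. Carry along a length-additive factorization $w\star u'=y'u'$ with $y'\le w$.
\begin{itemize}
\item If $s$ is an ascent of $w\star u'$, then $w\star u=y'u$, again length-additive.
\item If $s$ is a descent, apply the exchange condition to a reduced word of $y'$ followed by one of $u'$. The deleted letter must lie in the $y'$ part, since deleting from $u'$ would force $u's<u'$. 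This gives $w\star u=w\star u'=y''u$ with $y''<y'\le w$, still length-additive.
\end{itemize}
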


From this we can describe the maximal element in a shifted lower interval.

\begin{proposition}
\label{prop:max}
For any Coxeter group $W$ and $w, x \in W$, the shifted lower interval $\swx$ has a unique maximum:
\[
  \max(\swx) \;=\; w \star x^{-1}.
\]
\end{proposition}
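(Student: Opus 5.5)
We need two facts: $w\star x^{-1}$ lies in $\swx$, and every element of $\swx$ lies below it in Bruhat order.

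The upper bound comes directly from Proposition~\ref{prop:interval_product}. Take any element $vx^{-1}$ with $v\le w$. Since $x^{-1}\in[e,x^{-1}]$, the element $vx^{-1}$ is a product $yz$ with $y\in[e,w]$ and $z\in[e,x^{-1}]$. Proposition~\ref{prop:interval_product} then gives $vx^{-1}\in[e,w\star x^{-1}]$, that is, $vx^{-1}\le w\star x^{-1}$.

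Membership requires an explicit $v\le w$ with $vx^{-1}=w\star x^{-1}$. I would argue by induction on $\ell(x)$ as follows.

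\begin{itemize}
\item If $x=e$, take $v=w$.
\item Otherwise, write $x^{-1}=s\,y^{-1}$ with $s\in S$ and $\ell(x^{-1})=1+\ell(y^{-1})$, so $x = ys$. By associativity,
\[
w\star x^{-1}=(w\star s)\star y^{-1}.
\]
\item Lemma~\ref{lem:monoid_properties}(2), in its right-handed version obtained by applying the anti-automorphism $w\mapsto w^{-1}$ of the monoid, says that $w\star s=ws$ if $ws>w$, and $w\star s=w$ if $ws<w$.
\item Apply induction to $w' := w\star s$ and $y$. This gives $v'\le w'$ with $v'y^{-1}=w'\star y^{-1}=w\star x^{-1}$.
\item Set $v:=v's$, so that $vx^{-1}=v's\,s\,y^{-1}=v'y^{-1}$. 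It remains to check $v\le w$.
\end{itemize}

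Checking $v=v's\le w$ is the main obstacle. I would use the lifting property, which follows from the subword property: if $ws<w$ and $v'\le w$, then $v's\le w$. In the case $ws<w$ we have $w'=w$, so $v'\le w$ and lifting gives $v's\le w$. In the case $ws>w$ we have $w'=ws$ with $v'\le ws$. Lifting applied to $ws$, using $(ws)s<ws$, gives $v's\le ws$, which is too weak. So this case needs a finer argument.

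To handle it I would strengthen the induction statement to produce $v'$ with the extra property $v's<v'$ whenever $w's<w'$. Alternatively, I would bypass the issue by choosing the witness directly: take $v$ to be the product of an appropriate subword of a reduced word of $w$, namely the greedy subword realizing the Demazure product. Concretely, if $x^{-1}=s_1\cdots s_k$ is reduced, build $w\star x^{-1}$ step by step. At each step where $s_i$ is absorbed, record that $s_i$ cancels against a right descent. The element $v$ is then $w$ with those cancellations made, i.e. $v=(w\star x^{-1})\,x$. Its membership in $[e,w]$ follows by iterating the lifting property in the absorbing case. In the non-absorbing case ($ws>w$) one instead notes that $(ws)s=w$ is still available; carrying the pair $(v',w')$ through the induction with $v'$ chosen as $v'=(w'\star y^{-1})y$ should make both cases close.

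Uniqueness of the maximum is then automatic, since Bruhat order is antisymmetric.
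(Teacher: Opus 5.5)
Your upper bound is correct and is exactly the paper's argument (Proposition~\ref{prop:interval_product} with $z=x^{-1}$). The gap is the membership $w\star x^{-1}\in[e,w]\,x^{-1}$, which your proposal never establishes.

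Your induction peels $x^{-1}=s\,y^{-1}$ on the left. As you observe, it stalls when $ws>w$: the hypothesis only gives $v'\le ws$, and lifting yields $v's\le w$ only when $v's<v'$. Neither of your repairs closes this.
\begin{enumerate}
\item The strengthening ``$v's<v'$ whenever $w's<w'$'' is false if read for all $s\in S$. Take $w'=y=s$; then $w'\star y^{-1}=s$ and $v'=e$. So it needs at least the side condition $sy^{-1}>y^{-1}$. Even then, you would have to show the strengthened property propagates to $v=v's$ for the pair $(w,x)$, and this is not done.
\item The ``direct witness'' paragraph is circular. The element $v:=(w\star x^{-1})x$ is simply the only possible witness, and $v\le w$ is precisely what must be proved. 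The problematic case $ws>w$ is again left at ``should make both cases close''.
\item Running the greedy product along a reduced word $s_1\cdots s_k$ of $x^{-1}$ only yields $w\star x^{-1}=wz$ with $z\le x^{-1}$. That is the factorization on the wrong side.
\end{enumerate}

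The paper obtains membership by citing \cite[Lemma~1]{He09}. A self-contained fix is to run the greedy product over a reduced word $w=r_1\cdots r_m$ instead. By Lemma~\ref{lem:monoid_properties}(1) and associativity, $w\star x^{-1}=r_1\star(r_2\star\cdots\star(r_m\star x^{-1}))$. By part (2) of that lemma, in its stated left-handed form, each step $r_i\star z$ equals either $z$ or $r_iz$, the latter only when $\ell(r_iz)=\ell(z)+1$. Hence $w\star x^{-1}=ux^{-1}$, where $u=r_{i_1}\cdots r_{i_p}$ is a subword product and $\ell(ux^{-1})=p+\ell(x^{-1})$. This forces $\ell(u)=p$, so $u$ is given by a reduced subword of a reduced word of $w$, and $u\le w$ by the subword property.

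Alternatively, you can keep an induction but peel $x^{-1}=y^{-1}s$ on the right, carrying the invariant $\ell(vy^{-1})=\ell(v)+\ell(y^{-1})$. In the absorbing case the exchange property must then delete a letter from the $v$-part, since $y^{-1}s>y^{-1}$.
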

\begin{proof}
By \cite[Lemma~1]{He09}, we can write $w \star x^{-1} = ux^{-1}$ for some $u \leq w$ and hence $w \star x^{-1} \in \swx$. For any $v \leq w$, Proposition~\ref{prop:interval_product} gives
$vx^{-1} \in [e, w \star x^{-1}]$.  In other words, $vx^{-1} \leq w \star x^{-1}$.
\end{proof}

For the minimum we give a closed formula valid in every Coxeter group, based on the following downward counterpart of the Demazure product.

\begin{definition}
For $u \in W$ and $s\in S$, we first define the operator:

$$u\bstar s:=\begin{cases} u & \text{if}\quad \ell(us)> \ell(u)\\ us & \text{if}\quad \ell(us)<\ell(u)\end{cases}.$$

Let $\sigma = s_1 \cdots s_k$ be a word in the alphabet $S$, not necessarily reduced.  We define the \newword{opposite Demazure operator} as the expression 
$$u \bstar \sigma:=(((u\bstar s_1)\bstar s_2)\cdots \bstar s_k).$$ 
For $w \in W$, we write $u \bstar w$ for $u \bstar \sigma$ with $\sigma$ a reduced word of $w$ (we will later see that  $u \bstar w$ is independent of the choice of reduced word of $w$).
\end{definition}

Note that $u \bstar s \le u$ and $u \bstar s \le us$ hold by construction. Allowing non-reduced words costs nothing and is needed for the deletion argument of Lemma~\ref{lem:bstaranti}, since deleting a letter from a reduced word can leave a non-reduced one. For a non-reduced word the opposite Demazure operator depends on the word itself rather than on its product, which is why the element notation $u \bstar w$ is reserved for reduced words. We show in Corollary~\ref{cor:bstarwelldef} that the result of the opposite Demazure operator does not depend on the chosen reduced word.

\begin{remark}
    Unlike the usual Demazure product, the opposite Demazure operator does not give rise to a monoid structure on $W$.  In particular, it is not associative, even when restricting to reduced words.  The operator is constructed using the right-action of simple reflections, so one could define a left-action analogue.  We remark that, also unlike the usual Demazure product, the right and left operator do not agree in the sense that $u\bstar w$ takes different values under these actions.  For example, $u\bstar w$ is always less than or equal to $u$ under the right action and less than $w$ under the left action.  This incompatibility implies that $\bstar$ does not behave well with respect to inverses, which we will see later. 
\end{remark}

Despite the lack of monoid structure, the opposite Demazure operator does satisfy some nice properties which we now show.

\begin{lemma}\label{lem:bstarmono}
If $u \geq w$, then $u \bstar s \geq w \bstar s$. Consequently $u \geq w$ implies $u \bstar \sigma \geq w \bstar \sigma$ for any word $\sigma$.
\end{lemma}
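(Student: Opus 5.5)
The plan is to reduce everything to the single-letter statement and then induct along the word. For the first claim, fix $s\in S$ and $u\ge w$. The goal is $u\bstar s\ge w\bstar s$. I would split into four cases according to whether $us<u$ and whether $ws<w$. The tool throughout is the standard lifting property (the $Z$-property) of Bruhat order \cite[Proposition~2.2.7]{Bjorner-Brenti05}, in its right-handed form: if $u\ge w$ and $us<u$, then $us\ge w$ whenever $ws>w$, and $us\ge ws$ whenever $ws<w$. Equivalently, $us\ge ws$ and $u\ge ws$ both hold when $us<u$.

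The four cases run as follows.
\begin{enumerate}
\item If $us>u$ and $ws>w$, then $u\bstar s=u\ge w=w\bstar s$ directly.
\item If $us>u$ and $ws<w$, then $u\bstar s=u\ge w> ws=w\bstar s$.
\item If $us<u$ and $ws<w$, then $u\bstar s=us$ and $w\bstar s=ws$. The lifting property gives $us\ge ws$.
\item If $us<u$ and $ws>w$, then $u\bstar s=us$ and $w\bstar s=w$. The lifting property gives $us\ge w$.
\end{enumerate}
In every case we get $u\bstar s\ge w\bstar s$. The only nontrivial inputs are cases 3 and 4, which are exactly the two halves of the lifting property. So the main step, though routine, is recalling that property in its right-multiplication version. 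It follows from the left version via the inversion automorphism $v\mapsto v^{-1}$ of Bruhat order.

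For the consequence, let $\sigma=s_1\cdots s_k$ be an arbitrary word, not necessarily reduced. I would induct on $k$. The case $k=0$ is trivial. For the inductive step, set $u'=u\bstar(s_1\cdots s_{k-1})$ and $w'=w\bstar(s_1\cdots s_{k-1})$. By induction $u'\ge w'$, and applying the single-letter statement gives $u'\bstar s_k\ge w'\bstar s_k$. Since $u\bstar\sigma$ is defined letter by letter from the left, this is exactly $u\bstar\sigma\ge w\bstar\sigma$. Nothing in this argument depends on reducedness, which is consistent with the paper's remark that non-reduced words will be needed later.
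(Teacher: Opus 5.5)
Your proof is correct and is essentially the paper's argument: the same case split on whether $s$ is a right descent of $u$ and of $w$, with the two nontrivial cases settled by the lifting property and the word case by induction letter by letter. One small slip is that your side remark ``Equivalently, $us\geq ws$ and $u\geq ws$ both hold when $us<u$'' is false as written (take $u=s$, $w=e$, where $us=e\not\geq s=ws$); your case analysis never uses it and relies on the correct two-case form, so the argument is unaffected.
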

\begin{proof}
If $us > u$, then $u \bstar s = u \geq w \geq w \bstar s$. If $us < u$ and $ws < w$, then the lifting property gives $us \geq ws$. If $us < u$ and $ws > w$, then the lifting property gives $w \leq us$, and in this case $w \bstar s = w$. The lemma follows by iterating on the word $\sigma$.
\end{proof}

Next we observe how the second entry in the opposite Demazure operator receives a twist in the ordering compared to the usual Demazure product:

\begin{lemma}\label{lem:bstaranti}
Let $\sigma$ be a word in the alphabet $S$ and let $\tau$ be a subword of $\sigma$. Then $u \bstar \sigma \leq u \bstar \tau$.
\end{lemma}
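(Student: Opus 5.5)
By induction on the number of deleted letters, it suffices to treat a single deletion. Write $\sigma = \alpha\, s\, \beta$ with $\alpha,\beta$ words and $s\in S$, and let $\tau = \alpha\beta$. Since any subword is obtained from $\sigma$ by a finite sequence of single-letter deletions, the general statement follows by transitivity of $\le$.

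For a single deletion, set $v := u \bstar \alpha$. The operator is defined letter by letter, so
\[ u\bstar\sigma = (v \bstar s)\bstar \beta \quad\text{and}\quad u \bstar \tau = v\bstar\beta. \]
By construction $v\bstar s \le v$: it equals either $v$ or $vs<v$. We now apply Lemma~\ref{lem:bstarmono} with the pair $v \ge v\bstar s$ and the word $\beta$. This gives
\[ v\bstar\beta \;\ge\; (v\bstar s)\bstar\beta, \]
that is, $u\bstar\tau \ge u\bstar\sigma$, as required.

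No serious obstacle is expected. The only points to check are that the word decomposition is compatible with the letter-by-letter definition, which holds because the operator is defined sequentially even for non-reduced words, and that Lemma~\ref{lem:bstarmono} applies to arbitrary words $\beta$, which it does as stated. The edge cases where $\alpha$ or $\beta$ is empty are covered by the same computation.
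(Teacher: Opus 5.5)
Your proof is correct and is essentially the paper's argument. You reduce to a single deletion, observe that both computations reach the common element $u \bstar \alpha$, use $(u\bstar\alpha)\bstar s \le u\bstar\alpha$, and finish with Lemma~\ref{lem:bstarmono} applied to the remaining word $\beta$.
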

\begin{proof}
It suffices to consider the case where $\tau$ is obtained from $\sigma$ by deleting a single letter $s$, and then iterate. The two computations agree up to the position of $s$, arriving at some common element $m$. At that position the computation along $\sigma$ moves to $m \bstar s \leq m$ while the computation along $\tau$ stays at
$m$.  The lemma now follows from Lemma~\ref{lem:bstarmono}.
\end{proof}

\begin{proposition}
\label{prop:minformula}
Let $W$ be an arbitrary Coxeter group and $x, w \in W$. Then $x \bstar w$, computed with any reduced word of $w$, is the minimum of the (left) shifted interval 
\[
  x\,[e,w] \;=\; \{\, xv : v \le w \,\}.
\]
\end{proposition}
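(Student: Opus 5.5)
We need to show two things: that $x\bstar w$ lies in $x[e,w]$, and that it lies below every $xv$ with $v\le w$. Fix a reduced word $\sigma=s_1\cdots s_k$ of $w$. Write $u_0=x$ and $u_i=u_{i-1}\bstar s_i$, so that $x\bstar w=u_k$.

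\emph{Membership.} By construction each $u_i$ equals either $u_{i-1}$ or $u_{i-1}s_i$. Hence $u_k=x s_{i_1}\cdots s_{i_m}$ for some subsequence $i_1<\cdots<i_m$ of $\{1,\dots,k\}$. Let $v=s_{i_1}\cdots s_{i_m}$. Then $v$ is the product of a subword of a reduced word of $w$, so by the subword property $v\le w$, even if this subword is not reduced. Therefore $x\bstar w=xv\in x[e,w]$.

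\emph{Lower bound.} Take any $v\le w$. By the subword property, $v=s_{j_1}\cdots s_{j_p}$ for some reduced subword $\tau=s_{j_1}\cdots s_{j_p}$ of $\sigma$. By Lemma~\ref{lem:bstaranti}, $x\bstar\sigma\le x\bstar\tau$, so it suffices to show $x\bstar\tau\le x\tau=xv$. We prove the more general claim that $u\bstar\rho\le u\rho$ for every word $\rho$ and every $u$, by induction on the length of $\rho$. The base case and the one-letter case are the observation $u\bstar s\le us$ recorded after the definition. For the inductive step, write $\rho=\rho's$. Then
\[
u\bstar\rho=(u\bstar\rho')\bstar s .
\]
By induction, $u\bstar\rho'\le u\rho'$. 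Lemma~\ref{lem:bstarmono} then gives $(u\bstar\rho')\bstar s\le (u\rho')\bstar s$, and the one-letter case gives $(u\rho')\bstar s\le u\rho' s=u\rho$. Chaining these yields $x\bstar w\le xv$ for every $v\le w$.

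The two parts together show that $x\bstar w$ is the minimum of $x[e,w]$, for every choice of reduced word of $w$. Since a minimum is unique, the value does not depend on the reduced word, which is the content of Corollary~\ref{cor:bstarwelldef}.

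The one delicate point is Lemma~\ref{lem:bstaranti}: it is needed to pass from $\sigma$ to an arbitrary subword $\tau$, and it is the step that forces working with non-reduced words. It is already available, so the remaining argument is routine bookkeeping with Lemma~\ref{lem:bstarmono}.
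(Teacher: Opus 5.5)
Your proof is correct and follows the paper's argument. Membership comes from the applied letters forming a subword of $\sigma$, and the lower bound comes from Lemma~\ref{lem:bstaranti} (to pass to a reduced subword $\tau$ of $\sigma$ for $v$) plus an induction combining Lemma~\ref{lem:bstarmono} with $u \bstar s \le us$. The only cosmetic difference is that you prove $u\bstar\rho\le u\rho$ for arbitrary words $\rho$ by induction on word length, whereas the paper inducts on $\ell(v)$ over reduced words of $v$.
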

\begin{proof}
Fix the reduced word $\sigma = s_1 \cdots s_k$ of $w$ used to compute $x \bstar w$. The letters applied while running through $\sigma$ multiply, in order, to a subword of $\sigma$, so $x \bstar w = xw'$ for some $w' \leq w$, and hence $x \bstar w \in x\,[e,w]$.

Let $v \leq w$.  We claim that $x \bstar \tau \leq xv$ for any reduced word $\tau = s_{i_1} \cdots s_{i_j}$ of $v$ by induction on $\ell(v)$.  First observe that the case $v = e$ is trivial.  For $v\neq e$, we write $v = v's$ with $s = s_{i_j}$ and $\ell(v') < \ell(v)$.  The induction hypothesis and Lemma~\ref{lem:bstarmono} gives $$x \bstar \tau = (x \bstar (s_{i_1} \cdots s_{i_{j-1}})) \bstar s \leq (xv') \bstar s \leq xv's = xv.$$
By the subword property we may choose $\tau$ to be a subword of $\sigma$.  Lemma~\ref{lem:bstaranti} yields
$x \bstar w \leq x \bstar \tau \leq xv$. So $x \bstar w$ is below every element of $x\,[e,w]$.
\end{proof}

Proposition~\ref{prop:minformula} also explains why the two arguments of $\bstar$ cannot be exchanged the way they can for $\star$: the operator computes the minimum of $x\,[e,w]$, a minimum over the second factor only, so in general $(x \bstar w)^{-1} \neq w^{-1} \bstar x^{-1}$ (already for $x = e$ and $w = s$, where the two sides are $e$ and $s$).

\begin{corollary}\label{cor:bstarwelldef}
The opposite Demazure operator $x \bstar w$ does not depend on the choice of reduced word of $w$.
\end{corollary}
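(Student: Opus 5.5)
This follows directly from Proposition~\ref{prop:minformula}: a minimum is unique, so the value it produces cannot depend on choices. Fix $x,w\in W$, and let $\sigma$ and $\sigma'$ be two reduced words of $w$. Proposition~\ref{prop:minformula} is stated for an arbitrary reduced word of $w$. So both $x\bstar\sigma$ and $x\bstar\sigma'$ are minima of the same set $x\,[e,w]=\{xv : v\le w\}$. That set is defined purely in terms of the elements $x$ and $w$ and does not refer to any word.

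Antisymmetry of the Bruhat order gives the conclusion. The first minimum lies in $x\,[e,w]$, so $x\bstar\sigma'\le x\bstar\sigma$. Symmetrically, $x\bstar\sigma\le x\bstar\sigma'$. Hence the two are equal, and the notation $x\bstar w$ is well defined.

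The only point to check is that the proof of Proposition~\ref{prop:minformula} never relied on a special choice of $\sigma$. Two places could matter. First, the membership $x\bstar\sigma\in x\,[e,w]$ uses only the fact that a subword of a reduced word of $w$ multiplies to an element $\le w$. Second, the lower bound uses the subword property to embed a reduced word $\tau$ of $v$ in the fixed $\sigma$, and then invokes Lemma~\ref{lem:bstaranti}; both hold for any reduced word $\sigma$. So I expect no real obstacle; the corollary is essentially immediate.
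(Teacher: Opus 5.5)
Your argument is correct and follows the paper's proof: for any reduced word $\sigma$ of $w$, Proposition~\ref{prop:minformula} identifies $x \bstar \sigma$ with the minimum of $x\,[e,w]$, and that minimum does not depend on any word. Your antisymmetry step, and your check that the proposition's proof works for an arbitrary reduced word, only make explicit what the paper leaves implicit.
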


\begin{proof}
For every choice of reduced word, Proposition~\ref{prop:minformula} identifies $x \bstar w$ with the minimum of $x\,[e,w]$, which is defined without reference to any word.
\end{proof}

\begin{corollary}
\label{prop:uniqueminmax}
For any Coxeter group $W$ and $w, x \in W$, the shifted lower interval $\swx$ has a unique minimum and a unique maximum:
\[
  \min \swx \;=\; \bigl(x \bstar w^{-1}\bigr)^{-1}
  \qquad \text{and} \qquad
  \max \swx \;=\; w \star x^{-1}.
\]
\end{corollary}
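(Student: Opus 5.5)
The maximum statement is exactly Proposition~\ref{prop:max}, so only the minimum needs an argument. The plan is to transport the problem through the inversion map $v\mapsto v^{-1}$, which is an automorphism of Bruhat order. Under this map the right shift becomes a left shift: $(vx^{-1})^{-1} = x v^{-1}$. Moreover, as $v$ ranges over $[e,w]$, the inverse $v^{-1}$ ranges over $[e,w^{-1}]$, because inversion preserves Bruhat order and fixes $e$. Hence
\[
  \bigl(\swx\bigr)^{-1} \;=\; \{\, xz : z \le w^{-1} \,\} \;=\; x\,[e,w^{-1}],
\]
and inversion is an order isomorphism between these two subposets of $W$.

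Next I apply Proposition~\ref{prop:minformula} to the left shifted interval $x\,[e,w^{-1}]$. It says that $x\bstar w^{-1}$, computed with any reduced word of $w^{-1}$, lies in $x\,[e,w^{-1}]$ and is below every element of it. By Corollary~\ref{cor:bstarwelldef}, this element is well defined independently of the reduced word.

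Finally I pull back along inversion. Since inversion is order-preserving in both directions, $\bigl(x\bstar w^{-1}\bigr)^{-1}$ lies in $\swx$. It is also below every $vx^{-1}$ in $\swx$, because $x\bstar w^{-1}\le xv^{-1}$ implies $(x\bstar w^{-1})^{-1}\le vx^{-1}$. So it is the unique minimum. Uniqueness of both extrema is automatic, since a least or greatest element of a poset is unique.

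There is no real obstacle here. The only point needing care is the order of the factors: inversion turns a right shift into a left shift, and this is exactly why the formula uses $x\bstar w^{-1}$ rather than something like $w\bstar x^{-1}$. As the remark before the corollary notes, $\bstar$ does not commute with inversion, so one cannot simply swap the arguments.
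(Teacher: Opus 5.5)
Your proposal is correct and follows the paper's proof: the maximum is Proposition~\ref{prop:max}, and for the minimum you use the Bruhat automorphism $v\mapsto v^{-1}$ to identify $\swx$ with the left translate $x\,[e,w^{-1}]$. You then apply Proposition~\ref{prop:minformula} and pull the minimum $x\bstar w^{-1}$ back. You spell out in more detail than the paper why inversion carries $\swx$ onto $x\,[e,w^{-1}]$, but the argument is the same.
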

\begin{proof}
The maximum is Proposition~\ref{prop:max}. For the minimum, the map $w \mapsto w^{-1}$ is an automorphism of the Bruhat order carrying the translate $x\,[e,w^{-1}]$ onto $\swx$, so it carries the minimum $x \bstar w^{-1}$ of Proposition~\ref{prop:minformula} to the minimum of $\swx$.
\end{proof}

When $W$ is finite, the minimum can instead be obtained from the maximum: right multiplication by $w_0$ carries $\swx$ onto the shifted lower interval $[e,w]\,(w_0x)^{-1}$ and, being an anti-automorphism of the Bruhat order, reverses the induced order. Hence
$\min \swx = \bigl( w \star (x^{-1}w_0) \bigr)\, w_0$, and comparing with Corollary~\ref{prop:uniqueminmax} yields the identity
$$\bigl(x \bstar w^{-1}\bigr)^{-1} = \bigl( w \star (x^{-1}w_0) \bigr)\, w_0.$$

\begin{remark}
For the type A case, one could show that $\sywx$ has unique maximum and minimum using Bruhat interval polytopes by Tsukerman-Williams \cite{TW15}. We can construct a linear functional that extends the partial order we get from transpositions increasing length, and permuting the coordinates according to $x$ would give us the shifted order. Then the polytope being convex would imply there is a unique minimum and maximum.
\end{remark}

\subsection{Gradedness of shifted intervals}
\label{sub:gradedness}
In the previous subsection, we prove that shifted lower intervals have unique maximum and minimum.  To prove that these posets are graded, it suffices to show that all maximal chains have the same length.  The main tool we use is the \newword{weak generalized lifting property} of Caselli, D'Adderio, and Marietti \cite{Caselli-Adderio-Marietti21}.  We remark that our original approach used the \newword{generalized lifting property} of Tsukerman-Williams \cite{TW15} and Caselli-Sentinelli \cite{CS17}, which holds for Bruhat intervals in finite and simply-laced Coxeter groups \cite[Theorem~5.9]{CS17}.  The main advantage of the weak generalized lifting property is that it holds for every Coxeter system.

\newcommand{\acone}{\alpha\mathrm{Cone}}
Recall from Section~2 that every reflection $t \in T$ has a corresponding positive root $\alpha_t \in \Phi^+$. Given some subset $A$ of reflections, let 
\[\acone(A) \;:=\; \mathrm{Cone}\{\alpha_t \mid t \in A\},\]
the cone (set of all non-negative sums) of the corresponding roots.

\begin{theorem}[Weak generalized lifting property \cite{Caselli-Adderio-Marietti21}]\label{thm:weak_lifting}
    Let $(W,S)$ be an arbitrary Coxeter system with reflection set $T$, and let $u < v$ in $W$. Define 
    \[
    R_u = \{t \in T \mid u \lessdot ut \leq v\} \hspace{1cm}\text{and}\hspace{1cm} R^v = \{t\in T \mid u \leq vt \lessdot v\}.
    \]
    Then $\acone(R_u) \cap \acone(R^v) \neq \{0\}$.
\end{theorem}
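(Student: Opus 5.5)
We argue by contradiction: a strict linear separation of the two cones would yield a reflection ordering under which Dyer's EL-labeling (Theorem~\ref{thm:dyer}, in its right-label form) of $[u,v]$ has no increasing maximal chain.

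First, the trivial case. If $\ell(v)-\ell(u)=1$, then $R_u=R^v=\{u^{-1}v\}$, and the two cones share the nonzero vector $\alpha_{u^{-1}v}$. So assume $\ell(v)-\ell(u)\ge 2$, and suppose that $\acone(R_u)\cap\acone(R^v)=\{0\}$.

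\textbf{Step 1: separate the cones.} Both sets are finite, so both cones are finitely generated. Both lie in $\acone(\Phi^+)$, so both are pointed. Two finitely generated cones meeting only at $0$ can be strictly separated (Farkas/Gordan alternative). Hence there is a linear functional $f$ on $V$ with $f(\alpha_t)>0$ for all $t\in R_u$ and $f(\alpha_t)<0$ for all $t\in R^v$.

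\textbf{Step 2: build a reflection ordering from $f$.} Fix a functional $g$ that is strictly positive on the simple roots, hence on $\Phi^+$. Order $T$ by the slope $\phi(t)=f(\alpha_t)/g(\alpha_t)$. The strict inequalities in Step 1 are open conditions and $T$ is countable, so we may perturb $f$ slightly to make $\phi$ injective on $T$ while keeping those inequalities.

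We claim that $t\prec t' \iff \phi(t)<\phi(t')$ is a reflection ordering. Let $W'$ be a dihedral reflection subgroup with $\chi(W')=\{r,t\}$. Each positive root of $W'$ has the form $a\alpha_r+b\alpha_t$ with $a,b\ge 0$. The string order $r,\,rtr,\,rtrtr,\dots,trt,\,t$ lists these roots by monotonically increasing $b/a$, that is, by angle in the plane spanned by $\alpha_r,\alpha_t$. Along such a family, $\phi(a\alpha_r+b\alpha_t)=(af(\alpha_r)+bf(\alpha_t))/(ag(\alpha_r)+bg(\alpha_t))$ is a M\"obius function of $b/a$ with positive denominator, hence monotone. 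So $\prec$ restricts to the string order or its reverse. This dihedral verification, including the infinite-dihedral case, is the step I expect to need the most care, but it reduces to the planar angle argument just given.

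\textbf{Step 3: contradiction.} By Theorem~\ref{thm:dyer} for right labels, $[u,v]$ has an increasing maximal chain $u=c_0\lessdot c_1\lessdot\cdots\lessdot c_k=v$ with $k\ge 2$. Its first label lies in $R_u$, so it has $\phi>0$. Its last label lies in $R^v$, so it has $\phi<0$. An increasing chain needs the first label $\prec$ the last label, which here would require a positive slope to be smaller than a negative one. This is impossible. Hence the cones meet nontrivially.
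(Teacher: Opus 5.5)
The paper does not prove this statement. It imports it from \cite{Caselli-Adderio-Marietti21}, where it is stated for left multiplication, and transfers it to the right-multiplication form by applying it to $[u^{-1},v^{-1}]$.

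Your argument is a correct, self-contained derivation from Dyer's theorem (Theorem~\ref{thm:dyer}, right-label form), which the paper needs anyway. You separate the two cones by a functional $f$. You then order the roots by the slope $f(\alpha_t)/g(\alpha_t)$, which gives a reflection ordering. Finally, the first label of the increasing chain of $[u,v]$ lies in $R_u$ and the last lies in $R^v$, and this is incompatible with that ordering. The separation step is sound because both cones sit inside the pointed cone $\mathrm{Cone}(\Phi^+)$. The dihedral check is exactly the monotonicity of a linear-fractional function of $b/a$ with positive denominator.

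A few details deserve a line each:
\begin{itemize}
\item If $S$ is infinite, first pass to the finite-rank parabolic subgroup $W_J$ containing $[u,v]$. Then $V^*$ is finite-dimensional and $T$ is countable.
\item Distinct positive roots are never proportional, since all roots satisfy $B(\alpha,\alpha)=1$. So each tie $\phi(t)=\phi(t')$ cuts out a proper hyperplane in $V^*$, and the perturbation works.
\item You should cite that the positive roots of a dihedral reflection subgroup with $\chi(W')=\{r,t\}$ lie in $\mathrm{Cone}(\alpha_r,\alpha_t)$. You should also cite that the string order lists them by increasing angle from $\alpha_r$ to $\alpha_t$.
\end{itemize}

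Beyond self-containment, your route gives a further shortcut for the paper itself. In Proposition~\ref{prop:unitchains} the separating functional is already supplied by $x^{-1}$. Instead of constructing an ordering, one can take an $N(x)$-initial ordering, which exists by Lemma~\ref{prop:biclosed}. Under the contradiction hypothesis $R_u\subseteq T\setminus N(x)$ and $R^v\subseteq N(x)$, the last label of the increasing chain of $[u,v]$ in $W$ then precedes its first label. So gradedness of shifted lower intervals follows from Dyer's theorem alone, with no appeal to the weak lifting property or to cone geometry. The citation, by contrast, delivers the general cone statement at no cost, but leaves an external dependency.
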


In \cite{Caselli-Adderio-Marietti21} the theorem is stated with left multiplication, for the sets $\{t : u \lessdot tu \leq v\}$ and $\{t : u \leq tv \lessdot v\}$. Applying that statement to the interval $[u^{-1}, v^{-1}]$ and using that inversion is an automorphism of the Bruhat order with $w \lessdot tw$ if and only if $w^{-1} \lessdot w^{-1}t$ yields the form above, with the same reflections and hence the same roots.

\begin{proposition}
\label{prop:unitchains}
Let $W$ be a Coxeter group and $w, x \in W$. Then for all $u < v$ in $\swx$ there is a chain from $u$ to $v$ in $\swx$ of
length $\ell(v) - \ell(u)$.
\end{proposition}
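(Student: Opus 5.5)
The plan is to argue by induction on $\ell(v)-\ell(u)$. At each step I will use Theorem~\ref{thm:weak_lifting} to find a single Bruhat cover that stays inside $\swx$ and also stays inside the interval between $u$ and $v$. Write $u = ax^{-1}$ and $v = bx^{-1}$ with $a,b \le w$. Since $u<v$ we have $\ell(v)-\ell(u)\ge 1$. It suffices to produce either an element $u' \in \swx$ with $u \lessdot u' \le v$, or an element $v' \in \swx$ with $u \le v' \lessdot v$. We then apply induction to the pair $u'<v$ (or $u<v'$), whose length gap is one smaller. If that gap is $0$, then $u'\le v$ with equal length forces $u'=v$ (respectively $v'=u$), and the chain is already complete.

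The first step is to translate membership in $\swx$ via the switching lemma (Lemma~\ref{lem:switch}). Take $t\in R_u$, so $u \lessdot ut \le v$, and put $r=x^{-1}tx$. Then $ut = (ar)x^{-1}$. Since $ut>u$, Lemma~\ref{lem:switch} says $ar<a$ exactly when $t\in N(x)$. In that case $ar \le a \le w$, so $ut\in\swx$. Symmetrically, take $t\in R^v$, so $vt\lessdot v$, and write $vt=(br)x^{-1}$. Since $vt<v$, the switching lemma gives $br<b$ exactly when $t\notin N(x)$, and then $vt \in \swx$. Hence we are done unless both
\[
R_u \subseteq T\setminus N(x) \qquad\text{and}\qquad R^v \subseteq N(x).
\]

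The key step is to rule out this bad case using Theorem~\ref{thm:weak_lifting} together with a separating linear functional. Choose a linear functional $\varphi$ on $V$ that is strictly positive on the simple roots, and set $f(\beta)=\varphi(x^{-1}\beta)$. By the discussion in Section~\ref{section:prelim}, $t\in N(x)$ if and only if $x^{-1}(\alpha_t)\in\Phi^-$. Therefore $f(\alpha_t)<0$ for $t\in N(x)$ and $f(\alpha_t)>0$ for $t\notin N(x)$. In the bad case every nonzero element of $\acone(R_u)$ has $f>0$, while every nonzero element of $\acone(R^v)$ has $f<0$. The two cones would then meet only in $\{0\}$, contradicting Theorem~\ref{thm:weak_lifting}. (Both $R_u$ and $R^v$ are nonempty because $u<v$, but the cone argument does not even need this.)

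I expect the main obstacle to be getting the directions right in the switching lemma for the two cases. A cover going up from $u$ stays in $\swx$ precisely when its label lies in $N(x)$, and a cover going down from $v$ stays in $\swx$ precisely when its label lies outside $N(x)$. This is exactly what makes the functional $f$ separate the two cones. Once that is checked, the induction is routine.
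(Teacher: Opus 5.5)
Your proof is correct and follows the paper's argument essentially step for step. It uses induction on $\ell(v)-\ell(u)$ and the switching lemma to show that an upward cover $u\lessdot ut\le v$ with $t\in N(x)$, or a downward cover $u\le vt\lessdot v$ with $t\notin N(x)$, stays in $[e,w]\,x^{-1}$. It then rules out the remaining case with the weak generalized lifting property together with $\alpha\mathrm{Cone}(N(x))\cap\alpha\mathrm{Cone}(T\setminus N(x))=\{0\}$; you separate the cones with the functional $\varphi\circ x^{-1}$, while the paper applies $x^{-1}$ and uses $\mathrm{Cone}(\Phi^+)\cap\mathrm{Cone}(\Phi^-)=\{0\}$, which is the same thing.

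One small correction to your closing remark: these label conditions are sufficient but not necessary for a cover to stay in $[e,w]\,x^{-1}$ (if $W$ is finite and $w=w_0$, every cover survives), so ``precisely when'' should read ``whenever.'' Your actual argument only uses sufficiency, so nothing breaks.
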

\begin{proof}
    We proceed by induction on $N := \ell(v) - \ell(u)$. If $N = 1$, then $u<v$ is itself such a chain. Let $N \geq 2$ and assume the claim for all smaller values. 
We first argue that $ut \in \swx$ for some $t \in R_u$, or $vr \in \swx$ for some $r \in R^v$. For $t \in R_u$, it suffices to have $utx < ux$, since then $utx < ux \leq w$. Likewise, for $r \in R^v$, it suffices to have $vrx < vx$. Assume for the sake of contradiction that
    \[ux < utx \hspace{1cm}\text{and}\hspace{1cm}vx < vrx\]
    for all $t \in R_u$ and $r \in R^v$.
    
    Fix $t \in R_u$. The pair $\{u, ut\}$ has $u < ut$, and by Lemma~\ref{lem:switch} the shift preserves this comparison, meaning $ux < utx$, exactly when $t \in T \setminus N(x)$. Similarly, for $r \in R^v$ the pair $\{vr, v\}$ has $vr < v$, and the shift reverses this comparison, meaning $vx < vrx$, exactly when $r \in N(x)$. Our assumption is thus that 
    \[R_u \subseteq T \setminus N(x) \hspace{1cm}\text{and}\hspace{1cm}R^v \subseteq N(x),\]
    and therefore 
    \[
        \acone(R_u) \cap \acone(R^v) \subseteq \acone(T\setminus N(x)) \cap \acone(N(x)).
    \]
    We claim the right-hand side is $\{0\}$. If $\sum c_i \alpha_i \in \acone(N(x))$, then 
    \[x^{-1} \left(\sum c_i \alpha_i\right) = \sum c_i x^{-1}(\alpha_i) \in \mathrm{Cone}(\Phi^-),\]
    since $x^{-1}(\alpha_t) \in \Phi^-$ for every $t \in N(x)$.  If $\sum d_i \beta_i \in \acone(T \setminus N(x))$, then 
    \[x^{-1} \left(\sum d_i \beta_i\right) = \sum d_i x^{-1}(\beta_i) \in \mathrm{Cone}(\Phi^+).\]
    Recall that $\mathrm{Cone}(\Phi^+)$ and $\mathrm{Cone}(\Phi^-)$ intersect trivially. Since $x^{-1}$ acts as a linear automorphism of the ambient space, we get that 
    \[\acone(N(x)) \cap \acone(T\setminus N(x)) = \{0\}.\]
    This contradicts Theorem~\ref{thm:weak_lifting}, and so there must be some $t \in R_u$ with $ut \in \swx$ or some $r \in R^v$ with $vr \in \swx$.

    In the first case, $\ell(v) - \ell(ut) = N - 1$ and $ut \le v$, so by induction there is a chain from $ut$ to $v$ in $\swx$ of length $N-1$, and prepending $u < ut$ yields the desired chain. The second case is symmetric, appending $vr < v$ to a chain from $u$ to $vr$ of length $N-1$.
\end{proof}
    
The above Proposition tells us that $\swx$ is a graded poset. Note where the proof of Proposition~\ref{prop:unitchains} uses that the
interval is a lower one: membership of $ut$ in $\swx$ only
requires the upper bound $utx \le w$, which follows from $utx < ux$ by transitivity. For a middle interval, the element $utx$ must also stay above the lower bound.  Indeed, if the shift by $x^{-1}$ pushes $utx$ down, $ut$ may fall below the lower bound. This failure is illustrated in the following example.

\begin{example}\label{ex:nongraded}
Let $W = S_4$ and $x = 1243 = x^{-1}$. The middle shifted interval $[1243, 3412]\,x^{-1}$ has $8$ elements, a unique minimum $1234$ and a unique maximum $3421$ (see Figure~\ref{fig:nongraded}).  However it is not graded: it contains the maximal chains
\[
  1234 < 1324 < 3124 < 3421
\]
of length $3$ and
\[
  1234 < 1324 < 1423 < 1432 < 2431 < 3421
\]
of length $5$. In particular, unlike in the lower case, covers of a middle shifted interval need not be covers of $W$: here $3124 \lessdot 3421$ has length gap $3$.
\end{example}

In Figure~\ref{fig:nongraded}, the left panel shows the interval $[1243, 3412]$ in $S_4$, with each cover marked by its right label.  The unique red edge is the one whose Bruhat comparison is reversed by the shift $x = 1243$, namely the edge with label $t_{34}$, since $N(x) = \{t_{34}\}$ and $xt_{34}x^{-1} = t_{34}$.  

The middle panel shows each vertex $u$ replaced by $ux^{-1}$, edges directed toward the Bruhat-larger element. 

The right panel shows the Hasse diagram of the middle shifted interval of Example~\ref{ex:nongraded}, drawn with each vertex at height equal to its Coxeter length. It has a unique minimum and maximum but is not graded: its maximal chains have lengths $3$ and $5$. Dashed edges are the covers $2134 \lessdot 2431$ and $3124 \lessdot 3421$ of length gap $3$, which cannot occur in a shifted lower interval by Proposition~\ref{prop:unitchains}.

\begin{figure}[h]
 \begin{center}
  \vspace{3.7mm}
\begin{tikzpicture}[scale=0.82,
  every node/.style={font=\small},
  elt/.style={inner sep=1.5pt},
  lab/.style={font=\scriptsize, fill=white, inner sep=0.8pt},
  flip/.style={red, thick},
  fliplab/.style={font=\scriptsize, fill=white, inner sep=0.8pt, text=red},
  longcov/.style={dashed}]
 \begin{scope}
  \node[elt] (b1243) at (0,0)        {$1243$};
  \node[elt] (b1342) at (-2,1.67)    {$1342$};
  \node[elt] (b1423) at (0,1.67)     {$1423$};
  \node[elt] (b2143) at (2,1.67)     {$2143$};
  \node[elt] (b1432) at (-2,3.33)    {$1432$};
  \node[elt] (b3142) at (0,3.33)     {$3142$};
  \node[elt] (b2413) at (2,3.33)     {$2413$};
  \node[elt] (b3412) at (0,5)        {$3412$};
  \draw        (b1243) -- node[lab,pos=.5]     {$t_{24}$} (b1342);
  \draw        (b1243) -- node[lab,pos=.5]     {$t_{23}$} (b1423);
  \draw        (b1243) -- node[lab,pos=.5]     {$t_{12}$} (b2143);
  \draw[flip]  (b1423) -- node[fliplab,pos=.3] {$t_{34}$} (b1432);
  \draw        (b1342) -- node[lab,pos=.5]     {$t_{23}$} (b1432);
  \draw        (b1342) -- node[lab,pos=.3]     {$t_{12}$} (b3142);
  \draw        (b1423) -- node[lab,pos=.3]     {$t_{13}$} (b2413);
  \draw        (b2143) -- node[lab,pos=.3]     {$t_{14}$} (b3142);
  \draw        (b2143) -- node[lab,pos=.5]     {$t_{23}$} (b2413);
  \draw        (b1432) -- node[lab,pos=.5]     {$t_{13}$} (b3412);
  \draw        (b3142) -- node[lab,pos=.5]     {$t_{23}$} (b3412);
  \draw        (b2413) -- node[lab,pos=.5]     {$t_{14}$} (b3412);
  \node at (0,-0.75) {$[1243,\,3412]$};
 \end{scope}
 \begin{scope}[shift={(5.5,0)}]
  \node[elt] (m1234) at (0,0)        {$1234$};
  \node[elt] (m1324) at (-2,1.67)    {$1324$};
  \node[elt] (m1432) at (0,1.67)     {$1432$};
  \node[elt] (m2134) at (2,1.67)     {$2134$};
  \node[elt] (m1423) at (-2,3.33)    {$1423$};
  \node[elt] (m3124) at (0,3.33)     {$3124$};
  \node[elt] (m2431) at (2,3.33)     {$2431$};
  \node[elt] (m3421) at (0,5)        {$3421$};
  \draw[->]        (m1234) -- (m1324);
  \draw[->]        (m1234) -- (m1432);
  \draw[->]        (m1234) -- (m2134);
  \draw[->]        (m1324) -- (m1423);
  \draw[->]        (m1324) -- (m3124);
  \draw[->]        (m1432) -- (m2431);
  \draw[->]        (m2134) -- (m3124);
  \draw[->]        (m2134) -- (m2431);
  \draw[->]        (m1423) -- (m3421);
  \draw[->]        (m3124) -- (m3421);
  \draw[->]        (m2431) -- (m3421);
  \draw[flip,->]   (m1423) -- (m1432);
  \node at (0,-0.75) {$[1243,\,3412]\,x^{-1}$};
 \end{scope}
 \begin{scope}[shift={(10.4,0)}]
  \node[elt] (n1234) at (0,0)      {$1234$};
  \node[elt] (n1324) at (-1,1)     {$1324$};
  \node[elt] (n2134) at (1,1)      {$2134$};
  \node[elt] (n1423) at (-1.6,2)   {$1423$};
  \node[elt] (n3124) at (0.3,2)    {$3124$};
  \node[elt] (n1432) at (-1.6,3)   {$1432$};
  \node[elt] (n2431) at (0.9,4)    {$2431$};
  \node[elt] (n3421) at (0.3,5)    {$3421$};
  \draw (n1234) -- node[lab,pos=.5]  {$t_{23}$} (n1324);
  \draw (n1234) -- node[lab,pos=.5]  {$t_{12}$} (n2134);
  \draw (n1324) -- node[lab,pos=.5]  {$t_{24}$} (n1423);
  \draw (n1324) -- node[lab,pos=.5]  {$t_{12}$} (n3124);
  \draw (n2134) -- node[lab,pos=.5]  {$t_{13}$} (n3124);
  \draw (n1423) -- node[lab,pos=.5]  {$t_{34}$} (n1432);
  \draw (n1432) -- node[lab,pos=.5]  {$t_{14}$} (n2431);
  \draw (n2431) -- node[lab,pos=.5]  {$t_{13}$} (n3421);
  \draw[longcov] (n2134) -- node[lab,pos=.5]  {$t_{24}$} (n2431);
  \draw[longcov] (n3124) -- node[lab,pos=.35] {$t_{24}$} (n3421);
  \node at (0,-0.75) {$[1243,3412]\,(1243)^{-1}$};
 \end{scope}
\end{tikzpicture}
    \captionsetup{width=1.0\linewidth}
  \captionof{figure}{The interval $[1243,3412]$ with flipping edges in red
  (left), the shift of its elements by $x = 1243$ with edges directed
  toward the Bruhat-larger element (middle), and the Hasse diagram of
  $[1243,3412]\,(1243)^{-1}$ (right). See Example~\ref{ex:nongraded}.}
  \label{fig:nongraded}
 \end{center}
\end{figure}

\section{EL-labelings of shifted intervals}\label{section:EL_labels}

The goal of this section is to prove Theorem \ref{thm:main_intro}.  In particular, we will show that for every Coxeter group, the right reflection labeling $\lambda_R$ is an EL-labeling of the shifted lower interval $\swx$ under any $N(x)$-initial reflection ordering (see Theorem \ref{thm:main} below). 

Our proof splits into two independent halves. The first half is a statement on general shifted intervals: provided the cover relations of a shifted interval come from the ambient Bruhat order, the entire EL-shellability framework reduces to a local check on its rank-$2$ intervals. The second half uses the switching lemma (Lemma \ref{lem:switch}) to verify that shifted lower intervals always pass this local check when equipped with an $N(x)$-initial reflection ordering.

\subsection{Ambient covers and broken diamonds}

Because a shifted interval $\suvx$ is defined as an induced subposet of $(W, \le)$, it inherits its partial order directly from the standard Bruhat order, which we call the \newword{ambient} Bruhat order.

However, it does not automatically inherit its cover relations. 

A \newword{cover relation} $y \lessdot z$ in a poset $P$ simply means that $y < z$ and no other element of $P$ lies strictly between them. The cover relations in Bruhat order $(W, \le)$ are always between elements whose lengths differ by one, but in an arbitrary induced subposet, or indeed some $\suvx$, a cover relation may skip over several intermediate lengths (as seen in the dashed edges of Figure~\ref{fig:nongraded}). When working with an interval $[y,z]$ in $P\subseteq W$, we use the notation $[y,z]_W$ to denote the corresponding interval in the poset $W$.

We say that the covers of $\suvx$ are \newword{ambient} if every cover relation $y \lessdot z$ in $\suvx$ is also a cover relation in the ambient Bruhat order, that is, $\ell(z) = \ell(y) + 1$. In this case the right label $\lambda_R(y \lessdot z) = y^{-1}z$ of every cover is a reflection.  

Proposition~\ref{prop:unitchains} from the previous section shows that shifted lower intervals are always well-behaved in this sense:

\begin{lemma}\label{lem:ambientcovers}
Let $W$ be a Coxeter group and $w, x \in W$. Then the covers of $\swx$ are ambient in $W$.
\end{lemma}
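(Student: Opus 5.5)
The plan is to derive the lemma directly from Proposition~\ref{prop:unitchains}. Let $y \lessdot z$ be a cover relation in the induced poset $\swx$, so $y < z$ in $W$ and no element of $\swx$ lies strictly between them. Proposition~\ref{prop:unitchains}, applied to the pair $y < z$, gives a chain
\[
y = c_0 < c_1 < \cdots < c_N = z, \qquad N = \ell(z) - \ell(y),
\]
whose elements all lie in $\swx$. Since Bruhat order is graded by length and each step is strict, each step raises length by at least one. The total length gain is $N$, so each step raises length by exactly one, and each $c_{i-1} < c_i$ is a cover in $W$.

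Now I use the cover hypothesis. Suppose $N \geq 2$. Then $c_1$ belongs to $\swx$ and satisfies $y < c_1 < z$, which contradicts $y \lessdot z$ being a cover in $\swx$. Therefore $N = 1$, that is, $\ell(z) = \ell(y)+1$. Since $y < z$ in Bruhat order with a length gap of one, $y \lessdot z$ is a cover in $W$. Equivalently, $z = yt$ for the reflection $t = y^{-1}z$.

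I do not expect any real obstacle, because the substance was already done in Proposition~\ref{prop:unitchains} through the weak generalized lifting property. The one point to check is that the chain produced there lies entirely inside $\swx$, not merely inside $W$. This holds by the construction in that proof: each added element $ut$ or $vr$ was shown to lie in $\swx$, and the induction is carried out within $\swx$.
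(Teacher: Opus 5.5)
Your proposal is correct and is essentially the paper's own proof. Both apply Proposition~\ref{prop:unitchains} to a cover $y \lessdot z$ of $\swx$ and observe that if $\ell(z)-\ell(y)\ge 2$, the resulting chain inside $\swx$ has an element strictly between $y$ and $z$, which contradicts the cover.
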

\begin{proof}
Suppose $u \lessdot v$ in $\swx$ with $\ell(v) - \ell(u) \ge 2$. Then the chain provided by Proposition~\ref{prop:unitchains} passes strictly
between $u$ and $v$ inside $\swx$, contradicting the cover relation.
\end{proof}
 
When the covers of $\suvx$ are ambient, its rank-$2$ intervals become tightly constrained. Recall that the Bruhat order is \newword{thin}: if two elements $y < z$ satisfy $\ell(z) - \ell(y) = 2$, there are exactly two elements of $W$ strictly between them \cite[Lemma~2.7.3]{Bjorner-Brenti05}. In our induced subposet, thinness yields the following structural property of rank-$2$ intervals:

\begin{proposition}\label{prop:diamonds}
Let $\suvx$ be a shifted interval whose covers are ambient in $W$. Let $[y,z]$ be a rank-$2$ interval in $\suvx$, and let $m_1, m_2 \in W$ be the two ambient middle elements satisfying $y \lessdot m_i \lessdot z$ in $W$. Then exactly one of the following holds:
\begin{enumerate}
    \item \textbf{Full Diamond:} both $m_1$ and $m_2$ belong to $\suvx$.
    \item \textbf{Broken Diamond:} exactly one of the middle elements, say $m_1$, belongs to $\suvx$. In this case the unique maximal chain $y \lessdot m_1 \lessdot z$ of $[y,z]$ in $\suvx$ is called its \newword{surviving chain}, and the missing element $m_2$ is called its \newword{dead middle}.
\end{enumerate}
\end{proposition}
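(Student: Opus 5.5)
The plan is to derive everything from the ambient-covers hypothesis combined with the thinness of the Bruhat order. First I will pin down the ambient length gap. Let $[y,z]$ be a rank-$2$ interval in $\suvx$, and choose a maximal chain $y \lessdot p \lessdot z$ inside $\suvx$. Because the covers of $\suvx$ are ambient, $\ell(p)=\ell(y)+1$ and $\ell(z)=\ell(p)+1$. Hence $y<z$ in $W$ with $\ell(z)-\ell(y)=2$. By thinness \cite[Lemma~2.7.3]{Bjorner-Brenti05}, the ambient interval $[y,z]_W$ has exactly two elements strictly between its endpoints. These are the elements $m_1,m_2$ of the statement, so the statement is well posed.

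Next I will identify the interior of $[y,z]$ in $\suvx$. The Bruhat order of $W$ restricted to $\suvx$ is the order of $\suvx$. An element $p\in\suvx$ with $y<p<z$ therefore lies strictly between $y$ and $z$ in $W$, so $p\in\{m_1,m_2\}$. Conversely, any $m_i$ that lies in $\suvx$ satisfies $y<m_i<z$ in $\suvx$. Thus the open interval $(y,z)$ in $\suvx$ equals $\{m_1,m_2\}\cap\suvx$.

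Finally I will rule out the empty case and split the rest. The interval $[y,z]$ in $\suvx$ has rank $2$, so it contains some element strictly between $y$ and $z$. By the previous step this element is $m_1$ or $m_2$, so the intersection is nonempty. The remaining possibilities are that the intersection has two elements, which gives the full diamond, or exactly one element, which gives the broken diamond. These cases are mutually exclusive, so exactly one of them holds.

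In the broken case, suppose without loss of generality that the surviving element is $m_1$. Then $y \lessdot m_1 \lessdot z$ holds in $\suvx$, since there is nothing strictly between consecutive terms. This chain is the unique maximal chain, because $m_1$ is the only interior element. I expect no genuine obstacle. The only point needing care is that rank $2$ in $\suvx$ means length difference $2$ in $W$, and that step is exactly where the ambient-covers hypothesis (Lemma~\ref{lem:ambientcovers} in the lower case) is used. Without it, a rank-$2$ interval of $\suvx$ could span a larger length gap, as in Example~\ref{ex:nongraded}.
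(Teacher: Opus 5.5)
Your proposal is correct and follows the paper's own argument: a length-$2$ chain in $\suvx$ with ambient covers forces $\ell(z)-\ell(y)=2$, thinness supplies the two ambient middles, and the case split depends on which of them lie in $\suvx$. The only difference is that you also state explicitly that the open interval $(y,z)$ in $\suvx$ equals $\{m_1,m_2\}\cap\suvx$, a step the paper leaves implicit.
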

\begin{proof}
Since $[y,z]$ has rank $2$ in $\suvx$, it contains at least one maximal chain $y \lessdot b \lessdot z$. Because the covers are ambient, this chain satisfies $\ell(b) = \ell(y)+1$ and $\ell(z) = \ell(b)+1$, so $b$ is one of the two ambient middle elements. The classification then depends on whether the remaining ambient middle element belongs to $\suvx$.
\end{proof}

\subsection{Assigning EL-labelings}
With the diamond dichotomy in place, we can build the general tool that powers our shellability result. It shows that when the covers of a shifted interval are ambient, we do not need to inspect the global poset to prove EL-shellability. Instead, we only need to inspect how labels behave on the surviving chains of its broken diamonds.

\begin{proposition}    
\label{prop:mainmid}
Let $\suvx$ be a shifted interval whose covers are ambient in $W$ and suppose $\suvx$ has a unique minimum and a unique maximum. Let $\prec$ be a reflection ordering on $T$. If the surviving chain of every broken diamond in $\suvx$ is $\prec$-increasing, then the right reflection labeling $\lambda_R$ is an EL-labeling of $\suvx$.
\end{proposition}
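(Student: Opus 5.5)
Throughout, write $P$ for $\suvx$. The plan is to prove the EL conditions for an arbitrary interval $[p,q]$ of $P$, with $p<q$, by induction on $\ell(q)-\ell(p)$, using Dyer's Theorem~\ref{thm:dyer} (for right labels) on the ambient interval $[p,q]_W$. Since the covers of $P$ are ambient, every maximal chain of $[p,q]$ in $P$ is also a maximal chain of $[p,q]_W$, and its labels are the same right labels. Hence at most one maximal chain of $[p,q]$ in $P$ is increasing, namely Dyer's unique increasing chain $C$ of $[p,q]_W$, provided $C$ lies in $P$. Moreover, if $C$ lies in $P$, it is automatically lexicographically first among the $P$-chains, because it is lexicographically first among all ambient chains. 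So the whole statement reduces to one claim: \emph{for every $p<q$ in $P$, the ambient increasing chain of $[p,q]_W$ lies entirely in $P$.} The existence of the unique minimum and maximum is needed only so that $P$ is bounded and the EL framework applies. Gradedness of each interval will follow once we know that all maximal chains of $[p,q]$ in $P$ are ambient-maximal, which holds by the ambient-cover hypothesis.

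To prove the claim, take any maximal chain $p=c_0\lessdot c_1\lessdot\cdots\lessdot c_k=q$ of $[p,q]$ in $P$; one exists since $P$ is finite. If this chain is not increasing, there is a descent at some position $i$: $\lambda_R(c_{i-1}\lessdot c_i)\succ\lambda_R(c_i\lessdot c_{i+1})$. Consider the rank-$2$ interval $[c_{i-1},c_{i+1}]$ in $P$. By Proposition~\ref{prop:diamonds} it is either a full or a broken diamond. It cannot be broken: the surviving chain of a broken diamond is increasing by hypothesis, and our chain passes through the only surviving middle element, so it would have no descent at position $i$. Therefore the diamond is full, and the other middle element $m$ lies in $P$. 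By Dyer's theorem applied to the rank-$2$ ambient interval $[c_{i-1},c_{i+1}]_W$, the chain through $m$ is the increasing one, and it is lexicographically smaller.

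Replacing $c_i$ by $m$ gives another maximal chain in $P$ that is strictly lexicographically smaller. Since $[p,q]_W$ is finite, this process terminates at a chain with no descents, which is an increasing maximal chain lying in $P$, and hence equal to $C$. This proves the claim.

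The main obstacle is making sure this descent-removal argument is airtight. In particular, I must check that the lexicographic order strictly decreases: the label sequences agree before position $i$, and the first label at position $i$ drops, because in the increasing dihedral chain the first label is the $\prec$-smallest. If that local fact is shaky, I would instead argue via Dyer's lexicographic minimality within $[c_{i-1},c_{i+1}]_W$, which gives the strict decrease directly because the two chains differ and the increasing one is lex-first. The remaining parts are routine consequences of Proposition~\ref{prop:diamonds} and Theorem~\ref{thm:dyer}.
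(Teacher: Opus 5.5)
Your proof is correct and follows essentially the same route as the paper. Ambient covers make every maximal chain of $[p,q]$ in $P$ a maximal chain of $[p,q]_W$, so Dyer's theorem gives uniqueness. A descent forces a full diamond (the broken case is excluded by hypothesis), and swapping the middle element produces a lexicographically smaller chain. The paper phrases existence by taking the lexicographically first chain in $P$ and deriving a contradiction rather than iterating the swap, and the induction on $\ell(q)-\ell(p)$ you announce at the start is never actually used.
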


\begin{proof}
Fix an interval $[y,z]$ of $\suvx$. We must show that it contains a unique $\prec$-increasing maximal chain, and that this chain lexicographically precedes all other maximal chains of $[y,z]$.

Because the covers of $\suvx$ are ambient, any maximal chain
$y = c_0 \lessdot c_1 \lessdot \cdots \lessdot c_k = z$ of $[y,z]$ in $\suvx$ satisfies $\ell(c_i) = \ell(y) + i$. Hence, it is a maximal chain of the ambient Bruhat interval $[y,z]_W$. 

By Dyer's theorem (Theorem~\ref{thm:dyer}, applied
to right labels), the interval $[y,z]_W$ possesses exactly one
$\prec$-increasing maximal chain. Consequently, $[y,z]$ contains at most one $\prec$-increasing maximal chain. 

To prove that an increasing chain exists, let
$C: y = c_0 \lessdot c_1 \lessdot \cdots \lessdot c_k = z$ be a maximal chain of $[y,z]$ in $\suvx$ that is lexicographically first among all its maximal chains. Assume for the sake of contradiction that $C$ contains a descent, meaning $\lambda_R(c_{i-1} \lessdot c_i) \succ \lambda_R(c_i \lessdot c_{i+1})$ for some index $i$. The interval $[c_{i-1}, c_{i+1}]$ is a rank-$2$ interval of $\suvx$ containing the chain $c_{i-1} \lessdot c_i \lessdot c_{i+1}$, so Proposition~\ref{prop:diamonds} applies.

If $[c_{i-1}, c_{i+1}]$ is a broken diamond, then the chain through $c_i$, the surviving chain, is $\prec$-increasing by the hypothesis, which is a contradiction.

Otherwise, the only other possibility allowed by Proposition~\ref{prop:diamonds} is that $[c_{i-1}, c_{i+1}]$ is a full diamond. Let $m$ be its other middle element, which belongs to $\suvx$. In the ambient Bruhat order, the diamond $[c_{i-1}, c_{i+1}]_W$ has exactly two maximal chains, of
which exactly one is increasing.  Theorem~\ref{thm:dyer} guarantees that the increasing chain is lexicographically smaller than the decreasing one. We can replace $c_i$ with $m$ to get a lexicographically smaller chain, again giving us a contradiction.

Therefore the lexicographically first chain $C$ contains no descent, hence it is $\prec$-increasing. Combined with the first part of this proof, $C$ is the unique increasing maximal chain of $[y,z]$ and is lexicographically first, completing the proof.
\end{proof}

Note that the proof of Proposition~\ref{prop:mainmid} never uses that $\suvx$ is a shifted interval. The same proof holds for any subset $P \subseteq W$, with the induced order, all of whose covers are ambient in $W$. We state it for shifted intervals as these are the only posets we apply it to.

\subsection{Broken diamonds of lower intervals}
The final piece of the puzzle is to verify that shifted lower intervals satisfy the broken diamond condition.

\begin{lemma}\label{lem:key}
Let $W$ be a Coxeter group and let $w, x \in W$. If $\prec$ is an $N(x)$-initial reflection ordering, then the surviving chain of every broken diamond in $\swx$ is $\prec$-increasing.
\end{lemma}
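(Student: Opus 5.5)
The plan is to show that the \emph{dead} chain of a broken diamond is $\prec$-decreasing. Since Dyer's theorem (Theorem~\ref{thm:dyer}, in its right-label form) guarantees that the ambient rank-$2$ interval $[y,z]_W$ has exactly one increasing maximal chain, the surviving chain must then be that increasing one.

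Fix a broken diamond $[y,z]$ in $\swx$ with surviving middle $m_1 \in \swx$ and dead middle $m_2 \notin \swx$. Write the dead chain as $y \lessdot m_2 = yt_3 \lessdot z = m_2 t_4$, so that $t_3 = \lambda_R(y\lessdot m_2)$ and $t_4 = \lambda_R(m_2 \lessdot z)$. Pull everything back through the shift: set $a = yx$, $b = m_2x$ and $c = zx$. Because $y,z \in \swx$ we have $a \le w$ and $c \le w$, while $m_2 \notin \swx$ forces $b \not\le w$.

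The key step is to determine the ambient comparisons among $a$, $b$ and $c$. The elements $a$ and $b$ differ by a reflection, namely $b = a\,(x^{-1}t_3x)$, so they are comparable. If $b < a$ held, then $b < a \le w$ would give $b \le w$, a contradiction; hence $a < b$. The same reasoning with $c \le w$ gives $c < b$.

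Now apply the switching lemma (Lemma~\ref{lem:switch}) to each edge of the dead chain.
\begin{itemize}
\item For the pair $\{a,b\}$: the shifted comparison is $y < m_2$ and the unshifted comparison is $a < b$. The comparison is preserved, so $t_3 \notin N(x)$.
\item For the pair $\{b,c\}$: the shifted comparison is $m_2 < z$ but the unshifted comparison is $c < b$. The comparison is reversed, so $t_4 \in N(x)$.
\end{itemize}
Since $\prec$ is $N(x)$-initial, every element of $N(x)$ precedes every element outside it, and therefore $t_4 \prec t_3$. Thus the dead chain $y \lessdot m_2 \lessdot z$ is $\prec$-decreasing.

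To conclude, use that the covers are ambient (Lemma~\ref{lem:ambientcovers}), so $[y,z]_W$ is a rank-$2$ Bruhat interval with exactly the two maximal chains through $m_1$ and $m_2$. By Theorem~\ref{thm:dyer} exactly one of these chains is increasing. The chain through $m_2$ is decreasing, so the surviving chain through $m_1$ is the increasing one.

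The only delicate point I expect is the orientation bookkeeping in the switching lemma. One must make sure to apply it with $u = b$, or with $u = a$, and the correct reflection $r = x^{-1}tx$, and to read off which of $t_3$ and $t_4$ lands in $N(x)$ from the equivalence as stated. Apart from that, the argument appears to go through directly with no case analysis.
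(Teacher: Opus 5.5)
Your proof is correct and is essentially the paper's argument. You use the same deduction $yx<m_2x$ and $zx<m_2x$ from $m_2x\not\le w$, the same switching-lemma conclusions $t_3\notin N(x)$ and $t_4\in N(x)$, and Dyer's uniqueness of the increasing chain in the ambient diamond; the only difference is that the paper argues by contradiction (assuming the dead chain is increasing), whereas you show directly that the dead chain is decreasing.
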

\begin{proof}
Let $[y,z]$ be a broken diamond in $\swx$, with surviving chain $y \lessdot b \lessdot z$ and dead middle $m \in W\setminus \swx$, and assume for the sake of contradiction that the surviving chain is $\prec$-decreasing. In the ambient Bruhat diamond $[y,z]_W$, exactly one of the two maximal chains is increasing. Because the chain through $b$ is decreasing, the
chain through the dead middle $m$ is the increasing one. We name its two reflection labels:
\[
  t_i \;:=\; \lambda_R(y \lessdot m) \;=\; y^{-1}m
  \qquad \text{and} \qquad
  t_a \;:=\; \lambda_R(m \lessdot z) \;=\; m^{-1}z,
\]
so that $t_i \prec t_a$.

Next, we track how these elements behave under the shift by $x$. Since $m$ is the dead middle, it fails to land in our lower interval ($mx \not\le w$). The elements $y$ and $z$ belong to $\swx$, so $yx \le w$ and $zx \le w$. The pairs $\{y, m\}$ and $\{m, z\}$ each differ by a reflection, so the shifted pairs $\{yx, mx\}$ and $\{mx, zx\}$ are comparable in the usual Bruhat order. If $mx\leq zx$ or $mx\leq yx$, we would get $mx \leq w$ giving us a contradiction. Hence we must have $yx < mx$ and $zx < mx$.

We now apply our switching lemma (Lemma~\ref{lem:switch}) to both pairs. Looking at the lower pair $\{y, m\} = \{y, yt_i\}$, the ambient order has $y < m$ and the shifted order preserves this with $yx < mx$. Because the direction did not change, the label $t_i$ cannot belong to $N(x)$. Looking at the upper pair $\{m, z\} = \{m, mt_a\}$, the ambient order has $m < z$ but the shifted order reverses this with $mx > zx$. Because the direction flipped, the label $t_a$ must belong to $N(x)$. But $N(x)$ is an initial section of $\prec$ and $t_i \prec t_a \in N(x)$, so $t_i \in N(x)$, giving us a contradiction.
\end{proof}

\subsection{Proof of the main theorem}
With all our tools developed, we can now assemble the pieces and prove our primary result, which also implies Theorem~\ref{thm:main_intro}.

\begin{theorem}\label{thm:main}
Let $W$ be an arbitrary Coxeter group and $w, x \in W$. For every reflection ordering $\prec$ of $T$ having $N(x)$ as an initial section, the right labeling $\lambda_R$ is an EL-labeling of $\swx$.
\end{theorem}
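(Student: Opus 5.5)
The proof is an assembly of Proposition~\ref{prop:mainmid} with the structural results of Section~\ref{section:shift_lower_intervals} and Lemma~\ref{lem:key}. Proposition~\ref{prop:mainmid} has three hypotheses, and I will check them for the shifted lower interval $\swx$ in turn.

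\emph{Extrema.} Corollary~\ref{prop:uniqueminmax} shows that $\swx$ has the unique minimum $\bigl(x \bstar w^{-1}\bigr)^{-1}$ and the unique maximum $w\star x^{-1}$. So $\swx$ is a bounded poset.

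\emph{Ambient covers.} Lemma~\ref{lem:ambientcovers}, which rests on Proposition~\ref{prop:unitchains} and hence on the weak generalized lifting property, shows that every cover $y\lessdot z$ of $\swx$ satisfies $\ell(z)=\ell(y)+1$. Consequently $\lambda_R(y\lessdot z)=y^{-1}z$ is a reflection, so $\lambda_R$ is a map from the covers into the totally ordered set $(T,\prec)$. The same fact makes $\swx$ graded: every maximal chain between the minimum and maximum has length equal to their length difference. This gradedness is required in the definition of EL-shellability.

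\emph{Broken diamonds.} Fix a reflection ordering $\prec$ that has $N(x)$ as an initial section. Such orderings exist by Lemma~\ref{prop:biclosed}. Lemma~\ref{lem:key} then states that the surviving chain of every broken diamond of $\swx$ is $\prec$-increasing.

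With all three hypotheses in place, Proposition~\ref{prop:mainmid} says that $\lambda_R$ is an EL-labeling of $\swx$, which proves the theorem. Theorem~\ref{thm:main_intro} follows because $\swx$ is a bounded graded poset carrying an EL-labeling.

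There is no real obstacle at this stage, since the substantive work was done in Proposition~\ref{prop:unitchains} and Lemma~\ref{lem:key}. The one point to check is that Proposition~\ref{prop:mainmid} is applied with the poset intervals $[y,z]$ of $\swx$ and not the ambient ones. That proposition compares chains in $\swx$ with Dyer's unique increasing chain in $[y,z]_W$ (Theorem~\ref{thm:dyer}, in its right-label form). This comparison is valid here because ambient covers ensure that every maximal chain of $[y,z]$ in $\swx$ is also a maximal chain of $[y,z]_W$.
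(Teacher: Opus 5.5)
Your proposal is correct and is exactly the paper's argument: the theorem follows by checking the three hypotheses of Proposition~\ref{prop:mainmid} using Corollary~\ref{prop:uniqueminmax}, Lemma~\ref{lem:ambientcovers}, and Lemma~\ref{lem:key}. Your extra remarks are accurate but already contained in those results: gradedness follows from ambient covers, and every maximal chain of $[y,z]$ is also a maximal chain of $[y,z]_W$.
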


\begin{proof}
By Corollary~\ref{prop:uniqueminmax}, the shifted lower interval $\swx$ possesses a unique minimum and a unique maximum. By Lemma~\ref{lem:ambientcovers}, the covers of $\swx$ are ambient in $W$, and Lemma~\ref{lem:key} ensures that the surviving chain of every broken diamond in $\swx$ is $\prec$-increasing. As all conditions are met, we can apply Proposition~\ref{prop:mainmid} directly to $\swx$ and conclude that the right reflection labeling $\lambda_R$ is an EL-labeling of $\swx$.
\end{proof}

\begin{corollary}\label{cor:consequences}
Let $W$ be a Coxeter group and $w, x \in W$. Then the shifted lower interval $\swx$ is EL-shellable. In particular, for every interval $[u,v]$ of $\swx$, the order complex of the open interval $(u,v)$ is shellable and Cohen-Macaulay, and is homotopy equivalent to a wedge of equidimensional spheres, one for each $\prec$-decreasing maximal chain of $[u,v]$ \cite[Section~2.7]{Bjorner-Brenti05}, \cite{BW82,BW83}. The M\"obius function of $\swx$ is given by
\[
  \mu(u,v) \;=\; (-1)^{\ell(v) - \ell(u)} \cdot
  \#\{\text{decreasing maximal chains of } [u,v]\} .
\]
\end{corollary}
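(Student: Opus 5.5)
The corollary follows from Theorem~\ref{thm:main} together with the standard consequences of EL-shellability; there is no new combinatorics to do. First, by Lemma~\ref{prop:biclosed}, an $N(x)$-initial reflection ordering $\prec$ exists for every $x$ in every Coxeter group. Theorem~\ref{thm:main} then makes the right labeling $\lambda_R$ an EL-labeling of $\swx$. By Corollary~\ref{prop:uniqueminmax} and Proposition~\ref{prop:unitchains}, $\swx$ is bounded and graded, so it is EL-shellable in the sense of Definition~\ref{def:ELlabeing}.

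Second, we pass to subintervals. Every interval $[u,v]$ of $\swx$ is itself bounded and graded, with rank function $\ell(\cdot)-\ell(u)$ by Lemma~\ref{lem:ambientcovers}. The restriction of $\lambda_R$ to $[u,v]$ is again an EL-labeling, because the defining conditions of an EL-labeling concern only subintervals, and the subintervals of $[u,v]$ are subintervals of $\swx$. We now apply the Bj\"orner--Wachs theory \cite{BW82,BW83} to $[u,v]$:
\begin{itemize}
\item the lexicographic order on maximal chains is a shelling of the order complex of $(u,v)$, where the restriction face of a chain is its set of descent positions;
\item shellability implies Cohen--Macaulayness \cite{Stanley96};
\item the order complex of $(u,v)$ is homotopy equivalent to a wedge of spheres of dimension $\ell(v)-\ell(u)-2$, one for each maximal chain whose restriction is the full facet, i.e.\ each $\prec$-decreasing maximal chain.
\end{itemize}

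For the M\"obius function, Philip Hall's theorem gives $\mu(u,v)=\tilde\chi(\Delta(u,v))$. A wedge of $k$ spheres of dimension $d=\ell(v)-\ell(u)-2$ has reduced Euler characteristic $(-1)^d k = (-1)^{\ell(v)-\ell(u)} k$. Hence $\mu(u,v)=(-1)^{\ell(v)-\ell(u)}\cdot\#\{\text{decreasing maximal chains of }[u,v]\}$, as claimed.

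The only point needing care is the degenerate short intervals. When $\ell(v)-\ell(u)\le 1$ the open interval is empty or the interval is $u=v$. These cases are checked directly: the unique chain is trivially decreasing, and the formula gives $\mu(u,u)=1$ and $\mu(u,v)=-1$ for a cover, matching the definition. The rest is routine citation, so I expect no real obstacle.
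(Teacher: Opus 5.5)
Your proposal is correct and follows the paper's route. The paper gives no separate argument for this corollary: it states it as an immediate consequence of Theorem~\ref{thm:main} (with $N(x)$-initial orderings supplied by Lemma~\ref{prop:biclosed}), the standard Bj\"orner--Wachs consequences of EL-shellability, and Philip Hall's theorem for the M\"obius function, which is exactly what you outline. One detail you leave implicit is that the paper's strict notion of ``increasing'' agrees with Bj\"orner--Wachs's weak one here: if consecutive labels along a chain of ambient covers $y \lessdot b \lessdot z$ satisfied $y^{-1}b = b^{-1}z = t$, then $z = ytt = y$, which is impossible.
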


\section{Further Questions and Final Thoughts}\label{section:conclusions}
 
Proposition~\ref{prop:mainmid} implies that a shifted middle interval
$\suvx$ with a unique minimum, unique maximum, and ambient covers is EL-shellable, provided we can find a reflection ordering that makes the surviving chain of every broken diamond increasing. However, the argument behind our key lemma (Lemma~\ref{lem:key}) explains why $N(x)$-initial orderings cannot succeed in this case. In particular, a dead middle $m$ can be obtained in two different ways.  Moreover, these two ways ask for opposite setups.
Consider a broken diamond $[y,z]$ whose dead chain carries the labels $t_i = y^{-1}m$ and $t_a = m^{-1}z$. In this case the surviving chain is increasing exactly when $t_a \prec t_i$.
 
If $m$ dies by failing the upper bound ($mx \not\le w_2$), the argument of Lemma~\ref{lem:key} forces $t_i\notin N(x)$ while $t_a\in N(x)$.  The demand that $t_a \prec t_i$ requires reflections in $N(x)$ to come first, which is granted whenever a reflection ordering is $N(x)$-initial. But if $m$ dies by failing the lower bound ($mx \not\ge w_1$), then everything flips.  Specifically, we have $t_i\in N(x)$ and
$t_a\notin N(x)$ and hence the reflection ordering needs to have $N(x)$ at the very end ($N(x)$-final).  As soon as $x$ is neither the identity nor the longest element (when $W$ is finite), a middle interval can have dead middles of both kinds at once.  In this case, no reflection ordering can be both $N(x)$-initial and $N(x)$-final at the same time.  
 
One can sometimes get around this issue by mixing, placing some parts of $N(x)$ early and others late, and in our computer experiments this often works, but not always.  There are middle intervals in which two different broken diamonds make conflicting demands on the exact same pair of reflections, so that no reflection ordering can satisfy the broken-diamond condition. 

Even worse, there exist shifted intervals which are graded posets but are not shellable (moreover, the associated order complex is not Cohen-Macaulay). To see this, consider the highlighted interval $[14325,53412]$ in Figure~\ref{fig:shellingcounter}, which is the Hasse diagram of the shifted interval $[13254,45132]\,(34152)^{-1}$. We have two maximal chains with disjoint interiors, meaning that the link of some face is not connected.

\begin{figure}[h]
 \begin{center}
  \vspace{3.7mm}
    \begin{tikzpicture}[scale=1.35,
  every node/.style={font=\small},
  elt/.style={inner sep=1.5pt},
  hi/.style={inner sep=1.5pt, text=blue},
  lab/.style={font=\scriptsize, fill=white, inner sep=0.8pt}]
  \node[elt] (a12345) at (0.00,0) {$12345$};
  \node[elt] (a12354) at (-2.22,1) {$12354$};
  \node[elt] (a12435) at (0.00,1) {$12435$};
  \node[elt] (a13245) at (2.17,1) {$13245$};
  \node[elt] (a12453) at (-2.93,2) {$12453$};
  \node[elt] (a13254) at (-1.45,2) {$13254$};
  \node[elt] (a14235) at (-0.01,2) {$14235$};
  \node[elt] (a13425) at (1.44,2) {$13425$};
  \node[elt] (a23145) at (2.91,2) {$23145$};
  \node[elt] (a13452) at (-3.31,3) {$13452$};
  \node[elt] (a14253) at (-2.19,3) {$14253$};
  \node[elt] (a23154) at (-0.98,3) {$23154$};
  \node[hi] (a14325) at (0.00,3) {$14325$};
  \node[elt] (a24135) at (1.12,3) {$24135$};
  \node[elt] (a32145) at (2.24,3) {$32145$};
  \node[elt] (a23415) at (3.29,3) {$23415$};
  \node[hi] (a14352) at (-3.12,4) {$14352$};
  \node[elt] (a24153) at (-1.81,4) {$24153$};
  \node[elt] (a32154) at (-0.60,4) {$32154$};
  \node[elt] (a42135) at (0.69,4) {$42135$};
  \node[hi] (a24315) at (1.89,4) {$24315$};
  \node[elt] (a32415) at (3.14,4) {$32415$};
  \node[hi] (a34152) at (-2.61,5) {$34152$};
  \node[elt] (a42153) at (-0.89,5) {$42153$};
  \node[elt] (a52134) at (0.83,5) {$52134$};
  \node[hi] (a42315) at (2.64,5) {$42315$};
  \node[hi] (a43152) at (-2.17,6) {$43152$};
  \node[elt] (a52143) at (-0.01,6) {$52143$};
  \node[hi] (a52314) at (2.21,6) {$52314$};
  \node[hi] (a53142) at (-1.48,7) {$53142$};
  \node[hi] (a52413) at (1.40,7) {$52413$};
  \node[elt] (a54132) at (-1.46,8) {$54132$};
  \node[hi] (a53412) at (1.44,8) {$53412$};
  \node[elt] (a54312) at (0.00,9) {$54312$};
  \draw (a12345) -- node[lab,pos=0.5] {$t_{45}$} (a12354);
  \draw (a12345) -- node[lab,pos=0.5] {$t_{34}$} (a12435);
  \draw (a12345) -- node[lab,pos=0.5] {$t_{23}$} (a13245);
  \draw (a12354) -- node[lab,pos=0.5] {$t_{35}$} (a12453);
  \draw (a12354) -- node[lab,pos=0.46] {$t_{23}$} (a13254);
  \draw (a12435) -- node[lab,pos=0.46] {$t_{45}$} (a12453);
  \draw (a12435) -- node[lab,pos=0.7] {$t_{24}$} (a13425);
  \draw (a12435) -- node[lab,pos=0.42] {$t_{23}$} (a14235);
  \draw (a13245) -- node[lab,pos=0.66] {$t_{45}$} (a13254);
  \draw (a13245) -- node[lab,pos=0.5] {$t_{34}$} (a13425);
  \draw (a13245) -- node[lab,pos=0.46] {$t_{24}$} (a14235);
  \draw (a13245) -- node[lab,pos=0.5] {$t_{13}$} (a23145);
  \draw (a12453) -- node[lab,pos=0.5] {$t_{25}$} (a13452);
  \draw (a12453) -- node[lab,pos=0.42] {$t_{23}$} (a14253);
  \draw (a13254) -- node[lab,pos=0.66] {$t_{35}$} (a13452);
  \draw (a13254) -- node[lab,pos=0.5] {$t_{25}$} (a14253);
  \draw (a13254) -- node[lab,pos=0.38] {$t_{13}$} (a23154);
  \draw (a13425) -- node[lab,pos=0.18] {$t_{45}$} (a13452);
  \draw (a13425) -- node[lab,pos=0.26] {$t_{23}$} (a14325);
  \draw (a13425) -- node[lab,pos=0.5] {$t_{14}$} (a23415);
  \draw (a14235) -- node[lab,pos=0.26] {$t_{45}$} (a14253);
  \draw (a14235) -- node[lab,pos=0.5] {$t_{34}$} (a14325);
  \draw (a14235) -- node[lab,pos=0.46] {$t_{13}$} (a24135);
  \draw (a23145) -- node[lab,pos=0.34] {$t_{45}$} (a23154);
  \draw (a23145) -- node[lab,pos=0.5] {$t_{34}$} (a23415);
  \draw (a23145) -- node[lab,pos=0.58] {$t_{24}$} (a24135);
  \draw (a23145) -- node[lab,pos=0.74] {$t_{12}$} (a32145);
  \draw (a13452) -- node[lab,pos=0.5] {$t_{23}$} (a14352);
  \draw (a14253) -- node[lab,pos=0.5] {$t_{35}$} (a14352);
  \draw (a14253) -- node[lab,pos=0.46] {$t_{13}$} (a24153);
  \draw[blue,line width=1pt,dashed] (a14325) -- node[lab,text=blue,pos=0.7] {$t_{45}$} (a14352);
  \draw[blue,line width=1pt,dashed] (a14325) -- node[lab,text=blue,pos=0.34] {$t_{14}$} (a24315);
  \draw (a23154) -- node[lab,pos=0.66] {$t_{25}$} (a24153);
  \draw (a23154) -- node[lab,pos=0.46] {$t_{12}$} (a32154);
  \draw (a23415) -- node[lab,pos=0.58] {$t_{23}$} (a24315);
  \draw (a23415) -- node[lab,pos=0.5] {$t_{12}$} (a32415);
  \draw (a24135) -- node[lab,pos=0.5] {$t_{45}$} (a24153);
  \draw (a24135) -- node[lab,pos=0.62] {$t_{34}$} (a24315);
  \draw (a24135) -- node[lab,pos=0.66] {$t_{12}$} (a42135);
  \draw (a32145) -- node[lab,pos=0.38] {$t_{45}$} (a32154);
  \draw (a32145) -- node[lab,pos=0.34] {$t_{34}$} (a32415);
  \draw (a32145) -- node[lab,pos=0.38] {$t_{14}$} (a42135);
  \draw[blue,line width=1pt,dashed] (a14352) -- node[lab,text=blue,pos=0.5] {$t_{13}$} (a34152);
  \draw (a24153) -- node[lab,pos=0.5] {$t_{15}$} (a34152);
  \draw (a24153) -- node[lab,pos=0.54] {$t_{12}$} (a42153);
  \draw[blue,line width=1pt,dashed] (a24315) -- node[lab,text=blue,pos=0.5] {$t_{12}$} (a42315);
  \draw (a32154) -- node[lab,pos=0.58] {$t_{25}$} (a34152);
  \draw (a32154) -- node[lab,pos=0.5] {$t_{15}$} (a42153);
  \draw (a32154) -- node[lab,pos=0.54] {$t_{14}$} (a52134);
  \draw (a32415) -- node[lab,pos=0.5] {$t_{13}$} (a42315);
  \draw (a42135) -- node[lab,pos=0.62] {$t_{45}$} (a42153);
  \draw (a42135) -- node[lab,pos=0.5] {$t_{34}$} (a42315);
  \draw (a42135) -- node[lab,pos=0.5] {$t_{15}$} (a52134);
  \draw[blue,line width=1pt,dashed] (a34152) -- node[lab,text=blue,pos=0.5] {$t_{12}$} (a43152);
  \draw (a42153) -- node[lab,pos=0.5] {$t_{25}$} (a43152);
  \draw (a42153) -- node[lab,pos=0.5] {$t_{14}$} (a52143);
  \draw[blue,line width=1pt,dashed] (a42315) -- node[lab,text=blue,pos=0.5] {$t_{15}$} (a52314);
  \draw (a52134) -- node[lab,pos=0.5] {$t_{45}$} (a52143);
  \draw (a52134) -- node[lab,pos=0.5] {$t_{34}$} (a52314);
  \draw[blue,line width=1pt,dashed] (a43152) -- node[lab,text=blue,pos=0.5] {$t_{14}$} (a53142);
  \draw (a52143) -- node[lab,pos=0.5] {$t_{34}$} (a52413);
  \draw (a52143) -- node[lab,pos=0.5] {$t_{25}$} (a53142);
  \draw[blue,line width=1pt,dashed] (a52314) -- node[lab,text=blue,pos=0.5] {$t_{35}$} (a52413);
  \draw[blue,line width=1pt,dashed] (a52413) -- node[lab,text=blue,pos=0.5] {$t_{25}$} (a53412);
  \draw[blue,line width=1pt,dashed] (a53142) -- node[lab,text=blue,pos=0.5] {$t_{34}$} (a53412);
  \draw (a53142) -- node[lab,pos=0.5] {$t_{24}$} (a54132);
  \draw (a53412) -- node[lab,pos=0.5] {$t_{23}$} (a54312);
  \draw (a54132) -- node[lab,pos=0.5] {$t_{34}$} (a54312);
\end{tikzpicture}
    \captionsetup{width=1.0\linewidth}
  \captionof{figure}{The interval $[13254,45132]$ shifted using $x=34152$.}
  \label{fig:shellingcounter}
 \end{center}
\end{figure}

A property to consider for studying middle intervals would be whether they admit a \newword{recursive atom ordering}.  This criterion is equivalent to CL-shellability, which relaxes an EL-labeling by allowing the label of a cover to depend on the maximal chain leading up to it \cite{BW82,BW83}. CL-shellability still yields all the topological consequences of Corollary~\ref{cor:consequences}.
 
\begin{question}\label{q:middleCL}
Is there a way to describe when $\suvx$ is graded/CL-shellable?
\end{question}
 
Corollary~\ref{cor:consequences} shows that the M\"obius function of a shifted lower interval counts its decreasing chains up to sign. Unlike in Bruhat order, where every interval has exactly one decreasing chain, here the count need not equal $1$.  Shifted intervals can fail to be thin, so they need not be Eulerian. Already the rank-$2$ chain $[4213, 4321]$ in Example~\ref{ex:longstem} has M\"obius value $0$. It would be interesting to determine exactly which pairs $(w,x)$ produce
thin intervals, to map out the possible values of the M\"obius
function, and to develop analogues of the $R$-polynomials and Kazhdan-Lusztig polynomials, whose classical recursions rest on the rank-$2$ structure that broken diamonds disturb.
 
At its core, the combinatorics of shifted Bruhat intervals ties back to geometry. As mentioned in the introduction, we began studying shifted intervals because of an upcoming project \cite{LOR}, where shifted intervals index the cells of affine pavings of Richardson varieties cut out by shifted Borel subgroups, with the induced order governing the closure data. We expect the shellability established in Theorem~\ref{thm:main} to reflect the topology of these pavings, in the same way that shellability of Bruhat intervals does for Schubert varieties, with the decreasing-chain counts computing local invariants of the stratification. Making this geometric dictionary precise would be an interesting topic.

\subsection*{Acknowledgments}
This research was carried out primarily during the 2026 Honors Summer Math Camp at Texas State University. The authors appreciate the support from the camp and also thank Texas State University for providing support and a great working environment. E.R. was supported by a grant from the Simons Foundation 941273.

The authors used Claude (Anthropic) and Gemini (Google) for editing assistance and for writing computer code used to search for examples and to verify computational claims. All mathematical content and computational results were checked by the authors, who take full responsibility for the paper.

\printbibliography
\end{document}